\def\real{\mathbb{R}}
\def\Mu{\mathcal{u}}
\renewcommand{\Pr}{\mathbb{P}}
\newcommand{\E}[1]{\mathbb{E}\left[#1\right]}
\newcommand{\Var}[1]{\mathrm{Var}\left[#1\right]}
\newcommand{\Cov}[1]{\mathrm{Cov}\left[#1\right]}
\def\Uj{U}
\def\Vj{V}
\def\real{\mathbb{R}}
\def\Mu{\mathcal{u}}
\def\icc{\mathfrak{i}}
\def\ipow{\frac{2^{\icc+1}-1}{2^{\icc}}}
\def\nmB{n_m}
\def\nfB{n_f}
\renewcommand{\Pr}{\mathbb{P}}
\newcommand{\ebnote}[1]{\textsf{{\color{red}{ Houcine:}   #1 }}}
\newcommand{\revised}[1]{\textcolor{black}{#1}}
\setlist[enumerate]{leftmargin=.5in}
\setlist[itemize]{leftmargin=.5in}
\crefname{hypothesis}{Hypothesis}{Hypotheses}
\title{
A stochastic Levenberg-Marquardt method using random models with complexity results
}
\author{El Houcine Bergou\thanks{
  Mohammed VI Polytechnic University, Ben Guerir, Morocco (\email{elhoucine.bergou@um6p.ma}).
  }
\and Youssef Diouane \thanks{ISAE-SUPAERO, Universit\'e de Toulouse, 31055 Toulouse Cedex 4, France
  (\email{youssef.diouane@isae-supaero.fr}).}
\and Vyacheslav Kungurtsev \thanks{Department of Computer Science, Faculty of Electrical Engineering, Czech Technical University in Prague. Support for this author was provided by the OP VVV project
CZ.02.1.01/0.0/0.0/16\_019/0000765 ``Research Center for Informatics''  
  (\email{vyacheslav.kungurtsev@fel.cvut.cz}).}
  \and Cl\'ement W. Royer \thanks{LAMSADE, CNRS, Universit\'e Paris Dauphine-PSL, 75016 Paris, France. Support for this author was provided by Subcontract 3F-30222 from Argonne National Laboratory, by CNRS INS2I under the grant GASCON and by ANR through a PRAIRIE chair endowment (\email{clement.royer@dauphine.psl.eu}).}}
\begin{document}

\def\proofsCV{0}

\maketitle
\footnotesep=0.4cm

{\small
\begin{abstract} 
Globally convergent variants of the Gauss-Newton algorithm are often the 
methods of choice to tackle nonlinear least-squares problems. Among such 
frameworks, Levenberg-Marquardt and trust-region methods are two 
well-established, similar paradigms. Both schemes have been 
studied when the Gauss-Newton model is replaced by a random model that is 
only accurate with a given probability. 
Trust-region schemes have also been applied to problems where the 
objective value is subject to noise: this setting is of particular interest 
in fields such as data assimilation, where efficient methods that can adapt 
to noise are needed to account for the intrinsic uncertainty in the input
data.\\
In this paper, we describe a stochastic Levenberg-Marquardt algorithm 
that handles noisy objective function values and random models, 
provided sufficient accuracy is achieved in probability. Our method relies 
on a specific scaling of the regularization parameter, that allows us to 
leverage existing results for trust-region algorithms. Moreover, we exploit 
the structure of our objective through the use of a family of  
stationarity criteria tailored to least-squares problems. Provided
the probability of accurate function estimates and models is sufficiently 
large, we bound the expected number of iterations needed to 
reach an approximate stationary point, which generalizes results based on 
using deterministic models or noiseless function values.  \revised{We illustrate 
the link between our approach and several applications related to inverse problems and machine learning.}
\end{abstract}

\begin{keywords}
Levenberg-Marquardt method, nonlinear least squares, random models, noisy
functions, worst-case complexity, data assimilation, machine learning.
\end{keywords}

\begin{AMS}
49M05, 90C56, 90C60
\end{AMS}

\section{Introduction}
\label{sec:intro}

Minimizing a nonlinear least-squares function is one of the most classical 
problems in numerical optimization, arising in a variety of applications. In
numerous cases, the objective function to be optimized can only be 
accessed through noisy estimates. Typical occurrences of such a formulation 
can be encountered while solving inverse 
problems~\cite{PCourtier_JNThyepaut_AHollingsworth_1994,
ATarantola_2005,YTremolet_2007} or while minimizing the error 
of a model in the context of machine 
learning~\cite{LBottou_FECurtis_JNocedal_2018}. In such cases, the 
presence of noise is often due to the estimation of the objective function 
via cheaper, less accurate calculations. For instance, in data-fitting 
problems, part of the data is often left aside while computing the function or 
derivative estimates, due to the cost of considering the entire available 
dataset. 

Such concerns have motivated the development of optimization frameworks 
that cope with inexactness in the objective function or its derivatives. In 
particular, the field of derivative-free 
optimization~\cite{ARConn_KScheinberg_LNVicente_2009}, where it is assumed 
that the derivatives exist but are unavailable for use in an algorithm, 
has expanded in recent years with the introduction of random 
models~\cite{JLarson_MMenickelly_SMWild_2019}. In a seminal work, Bandeira 
et al.~\cite{ASBandeira_KScheinberg_LNVicente_2012} applied arguments from 
compressed sensing to guarantee accuracy of quadratic models whenever the 
Hessian exhibits a certain (unknown) sparsity pattern. 
Trust-region methods based on general probabilistic models were then proposed 
and convergence to first- and second-order stationary points was established 
under appropriate accuracy assumptions on the 
models~\cite{ASBandeira_KScheinberg_LNVicente_2014}. Global convergence rates 
were derived for this approach in expectation and with high 
probability~\cite{SGratton_CWRoyer_LNVicente_ZZhang_2018}. In a parallel line 
of work, trust-region methods with probabilistic models were extended so as to 
accommodate noisy function values by Blanchet et 
al~\cite{RChen_MMenickelly_KScheinberg_2018}. To this end, the analysis 
accounts for two sources of randomness, arising from both the noisy function 
estimates and the random construction of the models. It can then be shown 
that the trust-region scheme requires at most $\mathcal{O}(\epsilon^{-2})$ 
iterations in expectation to drive the gradient norm below some threshold 
$\epsilon$~\cite{JBlanchet_CCartis_MMenickelly_KScheinberg_2019}.

In the context of derivative-free least-squares problems with exact function 
values, various deterministic approaches based on globalization of the 
Gauss-Newton method have been studied. The algorithms developed in the 
derivative-free community are mostly of trust-region type, and rely on 
building models that satisfy the so-called fully linear property, which 
requires the introduction of a so-called criticality step to guarantee
its satisfaction throughout the algorithmic process~\cite{CCartis_LRoberts_2019,
HZhang_ARConn_2012,HZhang_ARConn_KScheinberg_2010,SMWild_2017}.
The recent DFO-GN algorithm~\cite{CCartis_LRoberts_2019} was equipped with a 
complexity result with a bound of the same order as derivative-free 
trust-region methods for generic functions~\cite{RGarmanjani_DJudice_LNVicente_2016}. 
As for general problems, random models emerged as a way of 
relaxing the need for accuracy at every iteration. A Levenberg-Marquardt 
algorithm based in this idea was proposed by Bergou et 
al~\cite{EBergou_SGratton_LNVicente_2016}, motivated by problems from data 
assimilation: this method extends the classical Levenberg-Marquardt scheme by 
replacing the gradient of the objective function with a noisy estimate that is 
only accurate in probability. Using reasoning similar to the trust-region 
case~\cite{ASBandeira_KScheinberg_LNVicente_2014}, almost-sure global 
convergence to a first-order stationary point was established.

The case of noisy least squares has also been examined. A recent 
preprint~\cite{CCartis_JFiala_BMarteau_LRoberts_2019} proposed an efficient 
approach for handling noisy values in practice, but did not provide 
theoretical guarantees. A Levenberg-Marquardt framework for noisy 
optimization without derivatives was proposed by Bellavia et 
al.~\cite{SBellavia_SGratton_ERiccietti_2018}. This method assumes that 
function values can be estimated to a prescribed accuracy level 
and explicitly maintains a sequence of these levels throughout the 
iterations of the algorithm. Since the noise level must be small compared to the 
norm of the Levenberg-Marquardt step, one must be able to 
reduce the noise level when necessary (note that this idea resembles the 
criticality step of derivative-free model-based methods). In certain 
applications, this may be deemed as too expensive. By contrast, the use of 
random models and estimates with properties only guaranteed in probability 
allows for arbitrarily bad estimates, which seems more economical at the 
iteration level, and does not exclude the possibility of computing 
good steps from bad, cheap estimates. Probabilistic properties thus 
represent a valuable alternative to the above approach. Furthermore, the 
connections between Levenberg-Marquardt and trust-region 
methods~\cite{JJMore_1977} suggest that the analysis of the latter on 
noisy problems can help with studying the former.

In this paper, we propose a stochastic Levenberg-Marquardt framework that 
builds upon the algorithm of Bergou et 
al.~\cite{EBergou_SGratton_LNVicente_2016} to handle both random models and 
noise in the function evaluations. Our setup allows for 
arbitrarily inaccurate models or function estimates: provided those occur at 
the same time with a small probability, we can equip our method with 
complexity guarantees. Our analysis adapts that of the stochastic 
trust-region framework using random models proposed 
in~\cite{JBlanchet_CCartis_MMenickelly_KScheinberg_2019,
RChen_MMenickelly_KScheinberg_2018}, thanks to an appropriate definition 
of the Levenberg-Marquardt regularization parameter. In addition, we 
quantify convergence using a scaled stationarity criterion that accounts for 
the least-squares structure of our problem and covers standard as well as 
recently proposed metrics~\cite{CCartis_NIMGould_PhLToint_2013d}.

The remainder of the paper is organized as follows. Section~\ref{sec:LM} 
describes our Levenberg-Marquardt algorithm. Section~\ref{sec:LMprobaprop} 
details accuracy requirements that we enforce for the noisy function values and 
the probabilistic models. Worst-case 
guarantees for our framework are provided in Section~\ref{sec:theory}.
\revised{Section~\ref{sec:applis} discusses several applications of our method 
to inverse problems and machine learning.} 
Section~\ref{sec:conclusion} concludes our work.

\section{A Levenberg-Marquardt algorithm based on estimated values} 
\label{sec:LM}

This paper is concerned with the following nonlinear least squares problem:
\begin{equation} \label{eq:mainpb}
	\min_{x \in \real^n} \; f(x)  := \frac{1}{2}\|r(x)\|^2,
\end{equation}
where $r:\real^n \rightarrow \real^{\ell}$ is a so-called residual, vector-valued 
function, which we assume to be continuously differentiable, and $\|\cdot\|$ is 
the Euclidean norm. We consider that $r$ and its derivatives cannot be 
accessed directly for algorithmic purposes. Therefore, we will present an 
algorithm that relies on random approximations of these quantities that are 
of good quality with a certain probability.

In the rest of this section, we recall the main features of the 
Levenberg-Marquardt method, then describe our extension of this algorithm to 
handle inexact function and derivative values.

\subsection{Deterministic Levenberg-Marquardt paradigm}
\label{subsec:determLM}

Popular approaches for solving problem~\eqref{eq:mainpb} are based on the 
Gauss-Newton model. Given a current iterate~$x_j$, a step is computed as a 
solution of the linearized least-squares subproblem
\[
\min_{s \in \real^n} \; \revised{\frac{1}{2}\|r(x_j)+ J(x_j) s\|^2},
\]
where \revised{$J(\cdot)$ denotes the Jacobian of $r$.}
This subproblem possesses a unique solution if \revised{$J(x_j)$} has full 
column rank, and in that case the step is a descent direction for~$f$. When 
\revised{$J(x_j)$} is not of full 
column rank, the introduction of a regularization parameter can lead to 
similar properties. This is the underlying idea behind the 
Levenberg-Marquardt algorithm~\cite{KLevenberg_1944,DMarquardt_1963,
MROsborne_1976}, a globally convergent method based upon the Gauss-Newton 
model. At each iteration, one considers a step of the form
\revised{$- ( J(x_j)^\top J(x_j) + \gamma_j I )^{-1} J(x_j)^\top r(x_j)$},
corresponding to the unique solution of
\begin{equation} \label{eq:subproblemLM}
\min_{s \in \real^n} \; \revised{\frac{1}{2}\|r(x_j) + J(x_j) s\|^2 + 
\frac{1}{2}\gamma_j \|s\|^2,}
\end{equation}
where $\gamma_j \ge 0$ is an appropriately chosen regularization parameter, 
typically updated in the spirit of the classical trust-region radius update 
strategy at each iteration. In our proposed scheme, we draw a closer connection 
between this parameter and the trust-region radius by scaling~$\gamma_j$ 
by the norm of the gradient of the Gauss-Newton model. This approach has 
been previously proposed in Levenberg-Marquardt-type methods and leads to 
complexity guarantees that match those of trust-region 
schemes~\cite{EBergou_YDiouane_VKungurtsev_2020,RZhao_JFan_2016}.

\subsection{Algorithmic framework based on estimates}
\label{subsec:LMalgoestimates}

Our goal is to propose a method that applies to instances of~\eqref{eq:mainpb} 
for which neither $r$ nor $J$ can be accessed directly. Consequently, we 
consider a variant of the Levenberg-Marquardt algorithm, described in 
Algorithm~\ref{alg:LM}, in which both the 
function and gradient values are approximated.
At every iteration, estimates of the values of $f$ and its derivative at the 
current iterate are computed, and used to compute a Gauss-Newton type 
model~\eqref{eq:LMmodelalgo}. A regularized version of this model is then 
approximately minimized, yielding a trial step $s_j$. New estimates of the 
objective at the current and trial point are computed: this new point is  
accepted if the ratio $\rho_j$ between the estimated function decrease and 
the model decrease is sufficiently large.

A key feature of our method is that the regularization parameter is defined 
using a specific scaling formula: namely, we set
$\gamma_j = \mu_j \| J_{m_j}^\top r_{m_j} \|$ where $\mu_j \ge 0$. 
The parameter $\mu_j$ is updated depending on the value of $\rho_j$, and also 
on a condition involving the model gradient. Such 
updates are typical of derivative-free model-based methods based on 
random estimates~\cite{ASBandeira_KScheinberg_LNVicente_2014,
EBergou_SGratton_LNVicente_2016,RChen_MMenickelly_KScheinberg_2018,
SGratton_CWRoyer_LNVicente_ZZhang_2018}. \revised{Note that we follow these 
earlier references in checking the condition 
$\|J_{m_j}^\top r_{m_j}\| \ge \tfrac{\eta_2}{\mu_j}$ at the end of the 
iteration, while a more practical application would evaluate it at Step 2 
of our algorithm. Our theoretical analysis remains unchanged.}

\begin{algorithm}
{\bf \bf A Levenberg-Marquardt method using random models and estimates.}
\vspace{-2ex}
\label{alg:LM}
\begin{rm}
\begin{description}
\item[]
\item[Initialization] \ \\
Define $\eta_1 \in (0,1)$, $\eta_2,\mu_{\min}>0$, and $\lambda>1$. 
	Choose $x_0$ and $\mu_0 \geq \mu_{\min}$.
\vspace{1ex}
\item[For $j=0,1,2,\ldots$] \ \\
\vspace{-2ex}
		\begin{enumerate}
			\item Compute an estimate $f^0_j=\tfrac{1}{2}\|r_j^0\|^2$ of $f(x_j)$.
			\item Compute $r_{m_j}$ and $J_{m_j}$, the residual and the Jacobian 
			estimate at $x_j$, set\\ 
			$\gamma_j=\mu_j\|J_{m_j}^\top r_{m_j}\|$, and define the model $m_j$ of 
			$f$ around $x_j$ by:
				\begin{eqnarray} \label{eq:LMmodelalgo}
					\forall s \in \real^n,\ m_j(x_j+s) &:= 
					&\frac{1}{2}\|r_{m_j}+J_{m_j}s\|^2 \\
					&= &\frac{1}{2}\|r_{m_j}\|^2 + (J_{m_j}^\top r_{m_j})^\top s + 
					\frac{1}{2}s^\top J_{m_j}^\top J_{m_j}s. \nonumber
				\end{eqnarray}
			\item Compute an approximate solution $s_j$ of the subproblem
				\begin{equation} \label{eq:LMsubproblem}
					\min_{s \in \real^n} m_j(x_j+s) + \frac{\gamma_j}{2}\|s_j\|^2.\;
				\end{equation}
			\item Compute an estimate $f^s_j=\tfrac{1}{2}\|r_j^s\|^2$ of $f(x_j+s_j)$, 
			then compute 
				\begin{equation*}
					\rho_j \; := \;    
					\frac{f_j^0 - f_j^s}{m_j(x_{j}) - m_j(x_j+s_j) 
					- \tfrac{\gamma_j}{2}\|s_j\|^2}.\;
				\end{equation*}
			\item If $\rho_j \geq \eta_{1}$ and 
			$\|J_{m_j}^\top r_{m_j}\| \ge \tfrac{\eta_2}{\mu_j}$, 
			set $x_{j+1}=x_j+s_j$ and 
			$\mu_{j+1} = \max\left\{\tfrac{\mu_j}{\lambda},\mu_{\min}\right\}$.\\
			Otherwise, set $x_{j+1}=x_j$ and $\mu_{j+1} = \lambda\,\mu_j$.
		\end{enumerate}
\end{description}
\end{rm}
\end{algorithm}

\section{Probabilistic properties of models and function estimates} 
\label{sec:LMprobaprop}

The framework of Algorithm~\ref{alg:LM} allows for approximations of the 
objective function and its derivatives to construct both the models and 
estimate the function values. In this section, we consider that the function 
values and the derivatives can only be accessed through noisy approximations, 
and we define accuracy formulas in a deterministic and probabilistic sense.

\subsection{Deterministic accuracy}
\label{subsec:determaccuracy}

We begin by describing our accuracy requirements for models of the form given 
in~\eqref{eq:LMmodelalgo}. Following previous work on derivative-free 
Levenberg-Marquardt methods~\cite{EBergou_SGratton_LNVicente_2016}, we 
propose the following accuracy definition, and motivate its use further below.

\begin{definition} \label{defi:firstordacc}
	Consider a realization of Algorithm~\ref{alg:LM}, and the model $m_j$ 
	of $f$ defined around the iterate $x_j$ of the 
	form~\eqref{eq:LMmodelalgo}, and let $\kappa_{ef},\kappa_{eg}>0$. Then, 
	the model $m_j$ is called \emph{$(\kappa_{ef},\kappa_{eg})$-first-order 
	accurate} with respect to $(x_j,\mu_j)$ if the following properties hold:
	\begin{eqnarray}
		\label{eq:firstordaccgrad}
		\| J_{m_j}^\top r_{m_j} - \revised{J(x_{j})^\top r(x_j)}\| 
		&\le &\frac{\kappa_{eg}}{\mu_j},\\
		\label{eq:firstordaccfunc}
		\left| \|r(x_j)\|^2 - \|r_{m_j}\|^2 \right| 
		&\le &\frac{2\kappa_{ef}}{\mu_j^2}.
	\end{eqnarray}
\end{definition}

\begin{remark}
Definition~\ref{defi:firstordacc} resembles that of fully linear 
models in derivative-free optimization~\cite{ARConn_KScheinberg_LNVicente_2009}, 
thanks to a specific scaling of the regularization parameter. In particular, 
the accuracy requirement for the model gradient~\eqref{eq:firstordaccgrad} differs 
from the first-order accuracy property introduced by Bergou, Gratton and 
Vicente~\cite{EBergou_SGratton_LNVicente_2016}. With our choice of notation, 
the latter corresponds to:
\[
		\left \| J_{m_j}^\top r_{m_j} - 
		\revised{J(x_j)^\top r(x_j)} \right \| \; \le \; 
		\frac{\kappa_{eg}}{\gamma_j},
\]
One thus sees that this property uses $\gamma_j=\mu_j\|J_{m_j}^\top r_{m_j}\|$ 
in the right-hand side, while ours~\eqref{eq:firstordaccgrad} uses $\mu_j$. 
The purpose of our new property is twofold. First, it allows us to measure the 
accuracy in formulas~\eqref{eq:firstordaccgrad} 
and~\eqref{eq:firstordaccfunc} through a parameter that is updated in an 
explicit fashion throughout the algorithmic run: this is a key property for 
performing a probabilistic analysis of optimization methods. Secondly, we 
believe this choice to be a better reflection of the relationship between the 
Levenberg-Marquardt and the trust-region parameter. Indeed, the global solution 
of the subproblem~\eqref{eq:LMsubproblem} is given by 
\revised{$d_j = - \left(J(x_j)^\top J(x_j) + \gamma_j I\right)^{-1} J(x_j)^\top r(x_j)$}, 
which is also the solution of the trust-region subproblem
\begin{equation}
	\left\{
		\begin{array}{ll}
			\min_d &\frac{1}{2}\|\revised{r(x_j) + J(x_j)} d \|^2 \\
			s.t.		&\|d\| \le \delta_j = \|d_j\|.
		\end{array}
	\right.
\end{equation}
As a result, we see that for a large value of $\gamma_j$, one would have
$\delta_j =\mathcal{O}\left( \frac{\|\revised{J(x_j)^\top r(x_j)}\|}{\gamma_j}\right)$,
which suggests that $\gamma_j$ is not exactly equivalent to the inverse of the 
trust-region radius (as used in earlier 
work~\cite{EBergou_SGratton_LNVicente_2016}), \revised{but rather} is an 
equivalent to $\tfrac{\|\revised{J(x_j)^\top r(x_j)}\|}{\delta_j}$. As a 
result, the parameter $\mu_j$ can be thought as equivalent to 
$\tfrac{1}{\delta_j}$: our property \eqref{eq:firstordaccgrad} thus
matches the gradient accuracy condition in fully linear 
models~\cite{ARConn_KScheinberg_LNVicente_2009}.
\end{remark}

Note that Definition~\ref{defi:firstordacc} contains two conditions related 
to the model Jacobian and the value at the current point. The latter property, 
described by~\eqref{eq:firstordaccfunc}, is necessary because our method 
relies on inexact residual values. For the same reasons, we define accuracy 
conditions for the estimates computed at every iteration of our method. 

\begin{definition} \label{defi:accfunc}
	\revised{Consider a realization of Algorithm~\ref{alg:LM}, and the 
	residual estimates $r^0_j$ and $r^s_j$ computed at iteration $j$.}
	Given $\varepsilon_f > 0$, we say that $r^0_j$ and $r^s_j$
	are \emph{$\varepsilon_f$-accurate estimates of $f(x_j)$ and 
	$f(x_j+s_j)$} \revised{with respect to $(x_j,\mu_j)$} if
	\begin{equation} \label{eq:accfunc}
		\left| \|r^0_j\|^2 - \|r(x_j)\|^2 \right| 
		\le \frac{2\varepsilon_f}{\mu_j^2} 
		\quad \mathrm{and} \quad
		\left| \|r^s_j\|^2 - \|r(x_j+s_j)\|^2 \right| 
		\le \frac{2\varepsilon_f}{\mu_j^2}.
	\end{equation}
\end{definition}

Here again, we point out that the parameter $\mu_j$ plays the role of a 
reciprocal of the trust-region radius. In that sense, the previous definitions 
are consistent with the definitions of sufficient accuracy presented in the 
case of stochastic trust-region 
methods~\cite{RChen_MMenickelly_KScheinberg_2018}.

\subsection{Probabilistic properties}
\label{subsec:probaaccuracy}

The deterministic properties described in the previous section allow for a 
deterministic inexact analysis of Algorithm~\ref{alg:LM}. We are further 
interested in the case where the models and the estimates are computed in a 
stochastic fashion. This introduction of randomness implies that the 
iterates, regularization parameters and trial steps become stochastic 
processes. We will thus denote by $X_j$, $\Gamma_j$, $\Mu_j$ and $S_j$ 
the random quantities at iteration $j$; the notations $x_j= X_j(\omega)$, 
$\gamma_j= \Gamma_j(\omega)$, $\mu_j= \Mu_j(\omega)$ and $s_j = S_j(\omega)$ 
correspond to realizations of these processes.
The random model at iteration $j$ of Algorithm~\ref{alg:LM} will be denoted 
by $M_j$, and we use $m_j=M_j(\omega)$ for a realization of that model 
(corresponding to a realization of the algorithm). Similarly, we let 
$r_{M_j}$ and $J_{M_j}$ denote the estimates of the residual $r(X_j)$ and the 
Jacobian $J(X_j)$ at iteration $j$, with their realizations denoted by
$r_{m_j} = r_{M_j} (\omega)$, and $J_{m_j} = J_{M_j} (\omega)$.
We also define $R_j^0$ and $R_j^s$ as the random estimates of $r(X_j)$ 
and $r(X_j+S_j)$. The realizations of $R_j^0$ and $R_j^s$ will be denoted by 
$r_j^0$ and $r_j^s$.

Finally, we give the probabilistic equivalents of 
Definitions~\ref{defi:firstordacc} and~\ref{defi:accfunc} below.

\begin{definition} \label{defi:probafirstordacc}
	Let $p \in (0,1]$, $\kappa_{ef} >0$ and $\kappa_{eg} >0$. A sequence of 
	random models $\{M_j\}$ is said to be \emph{$p$-probabilistically 
	$\{\kappa_{ef},\kappa_{eg}\}$-first-order accurate} with respect to the 
	sequence $\{X_j,\Mu_j\}_j$ if the events
	\[
		\Uj_j \; := \; 
		\left\{\,\left\| J_{M_j}^\top r_{M_j}- J(X_j)^\top r(X_j) \right\| 
		\; \le \; \frac{\kappa_{eg}}{\Mu_j}\ \&\ 
		\left| \|r(X_j)\|^2 - \|r_{M_j}\|^2 \right| \; \le \; 
		\frac{2\kappa_{ef}}{\Mu_j^2}
		\right\}
	\]
	satisfy the following condition
	\begin{equation} \label{pjcondition}
		p_j^* \;  := \;
		 P(U_j | \mathcal{F}^{M \cdot R}_{j-1}) \; \geq \; p,
	\end{equation}
	where $\mathcal{F}^{M\cdot R}_j=\sigma(M_0,\ldots,M_{j-1},
	R^0_0,R^s_0,\dots,R^0_{j-1},R^s_{j-1})$ is the 
	$\sigma$-algebra generated by $M_0,\ldots,M_{j-1}$ and 
	$R^0_0,R^s_0,\dots,R^0_{j-1},R^s_{j-1}$.
\end{definition}

\begin{definition} \label{defi:probaccfun}
	Given constants  $\varepsilon_f>0$, and $q \in (0,1]$, the sequences 
	of random quantities $R_j^0$ and $R_j^s$ is called 
	$q$-probabilistically $\varepsilon_{f}$-accurate, for corresponding 
	sequence $\{ X_j, \Mu_j\}_j$, if the events
	\[
		\Vj_j \; := \; 
		\left\{ \left|\|R_j^0\|^2 - \|R(X_j)\|^2 \right| \; \le \;
		\frac{2\varepsilon_f}{\Mu_j^{2}}  
		\quad \mathrm{and} \quad 
		\left|\|R_j^s\| - \|R(X_j+S_j)\|^2 \right| \; \le \;
		\frac{2\varepsilon_f}{\Mu_j^{2}} \right \}
	\]
	satisfy the following condition
	\begin{equation} \label{qjcondition}
		q_j^* \;  := \; P(\Vj_j | \mathcal{F}^{M \cdot R}_{j-1/2}) \; \geq \; q,
	\end{equation}
	where $\mathcal{F}^{M\cdot R}_{j-1/2}$ is the $\sigma$-algebra generated by 
	$M_0,\ldots,M_j, R^0_0,R^s_0 \ldots, R_{j-1}^0,R_{j-1}^s$.
\end{definition}

\section{Convergence rate analysis} \label{sec:theory} 

In this section, we provide a theoretical study of our algorithm using 
stochastic process theory. Our methodology follows the approach by Blanchet et 
al.~\cite{JBlanchet_CCartis_MMenickelly_KScheinberg_2019} for trust-region 
methods.\footnote{When the \revised{original, unpublished} version of this 
paper~\cite{EBergou_YDiouane_VKungurtsev_CWRoyer_2018} was released, our 
complexity results differed from those that Blanchet et 
al.~\cite{JBlanchet_CCartis_MMenickelly_KScheinberg_2019} had obtained at that 
time for their algorithm. The analysis was then improved in the final, published 
version~\cite{JBlanchet_CCartis_MMenickelly_KScheinberg_2019} and matches the 
the results of \revised{this earlier, unpublished 
version}~\cite{EBergou_YDiouane_VKungurtsev_CWRoyer_2018}.} 
However, our setup introduces a number of variations that require us to 
make some modifications in key components of the analysis. In particular, we consider a 
measure of stationarity that exploits the least-squares form of the 
problem. Indeed, in order to take advantage of the least-squares structure 
of our problem, we focus on a scaled optimality criterion inspired by 
previous proposals for least-squares 
problems~\cite{CCartis_NIMGould_PhLToint_2013d}. Rather than considering 
$\|\nabla f(x)\| \le \epsilon$ as our stationarity condition, we introduce 
the criterion:
\begin{equation} \label{eq:wcccrit}
	\|r(x)\| \le \epsilon_p \quad \mathrm{or} \quad 
	\|g_r^{\icc}(x)\| \le \epsilon_d,
\end{equation}
where $g_r^{\icc}$ is the so-called scaled gradient defined for a fixed 
integer $\icc \in \mathbb{N} \cup \{-1\}$ by
\begin{equation} \label{eq:scaledgrad}
	g_r^{\icc}(x) \; := \; \left\{
		\begin{array}{ll}
			\tfrac{\|J(x)^T r(x)\|}{\|r(x)\|^{\ipow}} 
			&\mathrm{if\ } \|r(x)\| \neq 0, \\
			 & \\
			0 &\mathrm{otherwise.}
		\end{array}
		\right.
\end{equation}
In our framework, the tolerance $\epsilon_p$ corresponds to a tolerance after 
which the noise from the estimated values would dominate the actual residual 
value in the case of small or zero residuals. When the residuals at the 
optimum are non-zero, however, we consider a scaled version of the optimality 
conditions, captured by the scaled gradient $g_r^{\icc}$. In that case, the 
tolerance $\epsilon_d$ can be seen as a scaled version of the classical 
gradient tolerance.
Note that our definition of the scaled gradient~\eqref{eq:scaledgrad} matches 
previous proposals~\cite{CCartis_NIMGould_PhLToint_2013d,NIMGould_TRees_JAScott_2019} 
for $\icc=0$, while it corresponds to the classical gradient for $\icc=-1$. 
As $\icc \rightarrow \infty$, we have $g_r^\icc (x) \rightarrow \tfrac{\|J(x)^T r(x)\|}{\|r(x)\|^2}$ 
(when $\|r(x)\|\neq 0$), and the condition $\tfrac{\|J(x)^T r(x)\|}{\|r(x)\|^2} \ge \epsilon_d$ 
resembles that of gradient dominance of degree 1 
(see~\cite{CCartis_NIMGould_PhLToint_2013d}).

In order to connect the variation in the objective value with the scaled 
gradient criterion, we will rely on the \revised{lemma below, that is a direct 
corollary of Gould et al.~\cite[Lemma 3.11]{NIMGould_TRees_JAScott_2019} with 
fixed $\icc$ (we refer the reader to this earlier 
result~\cite[Lemma 3.11]{NIMGould_TRees_JAScott_2019} for a full proof).}

%
\begin{lemma}
\label{lemma:oursab}
	For any $a>b \ge 0$ and $c \in \real$,
	\begin{equation} \label{eq:oursab}
		a^2 - b^2 \le c \quad \Rightarrow \quad 
		a^{1/2^{\icc}} - b^{1/2^{\icc}} \le \frac{c}{a^{\ipow}}.
	\end{equation}
\end{lemma}

\subsection{Assumptions and deterministic results} \label{subsec:assumdeterm}

The objective function will be required to satisfy the following 
\revised{assumptions}.

\begin{assumption}
\label{assum:fC11}
	$f$ is continuously differentiable on an open set containing the level 
	set\\ $\mathcal{L}(x_0) = \left\{ x \in \real^n | f(x) \le f(x_0)\right\}$, 
	with Lipschitz continuous gradient, of Lipschitz constant $\nu$.
\end{assumption}

We also require that the Jacobian model is uniformly bounded over the sequence 
of iterates, for every realization of the algorithm. 

\begin{assumption}
\label{assum:Jacmodel}
	There exists $\kappa_{J_m}>0$ such that for all $j$ and all realizations 
	$J_{m_{j}}$ of the $j$-th model Jacobian, one has:
	\[
		\| J_{m_j} \| \leq \kappa_{J_m}.
	\]
\end{assumption}

Finally, we state the assumptions on the approximate solve of the 
subproblem~\eqref{eq:subproblemLM}.  \revised{The first assumption states that the trial 
step achieves a fraction of Cauchy decrease for the regularized model.}

\begin{assumption}
\label{assum:modeldecrease}
	There exists $\theta_{fcd} > 0$ such that for every iteration $j$ of any 
	realization of the algorithm,
	\begin{equation}
	\label{eq:modeldecrease}
		m_j(x_j) - m_j(x_j+s_j)-\frac{\gamma_j}{2}\|s_j\|^2 \; \geq \; 
		\frac{\theta_{fcd}}{2}\,\frac{\|J_{m_j}^\top r_{m_j}\|^2}{\|J_{m_j}\|^2+\gamma_j}.
	\end{equation}
\end{assumption}

\revised{
The second assumption states that the trial step satisfies desirable bounds 
on its norm and part of the model decrease, two terms that arise naturally 
in the theoretical analysis (see the proof of Lemma~\ref{lemma:condmodelgraditsucc}).}

\begin{assumption}
\label{assum:stepbounds}
	At each iteration $j$ and for every realization of the algorithm, the step 
	size satisfies
	\begin{equation}
	\label{eq:stepsizebound}
		\| s_j \| \; \leq \; \frac{2\|J_{m_j}^\top r_{m_j}\|}{\gamma_j} 
		= \frac{2}{\mu_j},
	\end{equation}
	and there exists $\theta_{in} > 0$ such that
	\begin{equation}
	\label{eq:stepproductbound}
		| s_j^{\top}\,(\gamma_j\,s_j+J_{m_j}^\top r_{m_j}) | \; \leq \;
		\frac{4\,\|J_{m_j}\|^2\,\|J_{m_j}^\top r_{m_j}\|^2 + 
		2\theta_{in}\,\|J_{m_j}^\top r_{m_j}\|^2}
		{\gamma_j^2} \; = \; 
		\frac{4 \|J_{m_j}\|^2+ 2 \theta_{in}}{\mu_j^2}.
	\end{equation}
\end{assumption}

Several choices for the approximate minimization of $m_j(x_j+s)$ verify 
relations~\eqref{eq:modeldecrease}, \eqref{eq:stepsizebound} 
and~\eqref{eq:stepproductbound}. In particular, 
\revised{Assumptions~\ref{assum:modeldecrease} and~\ref{assum:stepbounds} 
hold for the exact minimizer of the quadratic subproblem (for any $\theta_{in}>0$), 
but they are also valid for inexact steps (for some $\theta_{in}>0$) such as 
the Cauchy step or a step computed by the truncated Conjugate Gradient 
algorithm~\cite[Lemma 5.1]{EBergou_SGratton_LNVicente_2016}.}

As shown by the lemma below, our assumptions guarantee that an 
accurate model also provides an accurate estimate for the trial 
step.

\begin{lemma} \label{lemma:accmodelnextstep}
	Let Assumptions~\ref{assum:fC11}, \ref{assum:Jacmodel}, and 
	\ref{assum:stepbounds} hold for a realization of Algorithm~\ref{alg:LM}. 
	Consider the $j$-th iteration of that realization, and suppose that 
	$m_j$ is $(\kappa_{ef},\kappa_{eg})$-first-order accurate. Then,
	\begin{equation} \label{eq:accmodelnextstep}
		\left| f(x_j+s_j) - m_j(x_j+s_j) \right| \; \le \;
		\frac{\kappa_{efs}}{\mu_j^2},
	\end{equation}
	where $\kappa_{efs}  := \frac{2 \kappa_{ef}+4\kappa_{eg}+4(\nu+\kappa_{J_m}^2)}{2}$.
\end{lemma}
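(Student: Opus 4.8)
The plan is to expand both $f$ and the quadratic model~\eqref{eq:LMmodelalgo} around $x_j$ and to control the mismatch term by term. Recalling that $\nabla f(x_j)=J_j^\top r_j$ and using the explicit form of $m_j$, I would start from the exact identity
\[
f(x_j+s_j) - m_j(x_j+s_j) = \bigl[f(x_j) - m_j(x_j)\bigr] + \bigl[J_j^\top r_j - g_{m_j}\bigr]^\top s_j + \bigl[f(x_j+s_j) - f(x_j) - (J_j^\top r_j)^\top s_j\bigr] - \tfrac{1}{2}s_j^\top\bigl(J_{m_j}^\top J_{m_j} + \gamma_j I\bigr)s_j,
\]
and then apply the triangle inequality, treating the four bracketed groups separately.

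The first three groups are handled by the hypotheses already recorded. The term $f(x_j)-m_j(x_j)$ is bounded by $\kappa_{ef}/\mu_j^2$ directly from the first-order accuracy bound~\eqref{eq:firstordaccfunc}. For the linear mismatch $[J_j^\top r_j - g_{m_j}]^\top s_j$ I would use Cauchy--Schwarz together with the gradient-accuracy bound~\eqref{eq:firstordaccgrad} and the step-size bound~\eqref{eq:stepsizebound}, producing a contribution of order $\kappa_{eg}\|s_j\|/\mu_j = O(\kappa_{eg}/\mu_j^2)$. The nonlinearity term $f(x_j+s_j)-f(x_j)-(J_j^\top r_j)^\top s_j$ is the standard first-order Taylor remainder, which Assumption~\ref{assum:fC11} (Lipschitz gradient with constant $\nu$) bounds by $\tfrac{\nu}{2}\|s_j\|^2$; invoking~\eqref{eq:stepsizebound} once more turns this into a bound of order $\nu/\mu_j^2$.

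The remaining curvature term $\tfrac{1}{2}s_j^\top(J_{m_j}^\top J_{m_j}+\gamma_j I)s_j$ is where the argument is most delicate, and I expect it to be the main obstacle. The Gauss--Newton part $\tfrac{1}{2}s_j^\top J_{m_j}^\top J_{m_j}s_j$ is immediate: it is controlled by $\tfrac{1}{2}\kappa_{Jm}^2\|s_j\|^2$ through Assumption~\ref{assum:Jacmodel} and then by~\eqref{eq:stepsizebound}, yielding the $\kappa_{Jm}^2$ contribution. The regularization part $\tfrac{1}{2}\gamma_j\|s_j\|^2$ is the problematic one, because $\gamma_j=\mu_j\|g_{m_j}\|$ scales with the (a priori unbounded) model gradient norm, so the plain step-size bound alone only gives an estimate of order $\|g_{m_j}\|/\mu_j$, which need not be $O(1/\mu_j^2)$. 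This is precisely why Assumption~\ref{assum:stepbounds} includes the second inequality~\eqref{eq:stepproductbound}, bounding $|s_j^\top(\gamma_j s_j + g_{m_j})|$: this quantity encodes the approximate stationarity of $s_j$ for the subproblem~\eqref{eq:LMsubproblem}, since $\gamma_j s_j + g_{m_j} = \nabla m_j(x_j+s_j) - J_{m_j}^\top J_{m_j}s_j$, and~\eqref{eq:stepproductbound} is exactly what keeps the regularization contribution of order $1/\mu_j^2$. The care here lies in combining~\eqref{eq:stepproductbound} with the descent character of $s_j$ (so that the auxiliary $g_{m_j}^\top s_j$ does not resurface uncontrolled).

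Finally I would collect the four contributions over a common factor $1/\mu_j^2$ and read off the resulting constant, which should match $\kappa_{efs}$ up to the routine arithmetic simplification. Apart from the treatment of $\tfrac{1}{2}\gamma_j\|s_j\|^2$ via~\eqref{eq:stepproductbound}, the whole argument is bookkeeping with the triangle inequality and the accuracy and step bounds, so I would concentrate the verification effort on that single curvature estimate.
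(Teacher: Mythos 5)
Your decomposition and your handling of the first four contributions --- the function mismatch at $x_j$, the gradient mismatch paired with Cauchy--Schwarz and \eqref{eq:stepsizebound}, the Taylor remainder via the Lipschitz constant $\nu$, and the Gauss--Newton curvature via Assumption~\ref{assum:Jacmodel} --- coincide with what the paper does (up to the exact numerical constants, which the paper itself tallies loosely). The divergence is in the term you correctly single out as the main obstacle: the paper's proof never confronts $\tfrac{1}{2}\gamma_j\|s_j\|^2$ at all. Its first displayed inequality expands $m_j(x_j+s_j)$ as $m_j(x_j)+g_{m_j}^\top s_j+\tfrac{1}{2}s_j^\top J_{m_j}^\top J_{m_j}s_j$, silently discarding the regularization part of the quadratic, so you have identified a step the paper glosses over rather than one it resolves by a different device.

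Unfortunately your proposed repair does not close the gap for the two-sided bound as stated. Writing $\tfrac{1}{2}\gamma_j\|s_j\|^2=\tfrac{1}{2}s_j^\top(\gamma_j s_j+g_{m_j})-\tfrac{1}{2}g_{m_j}^\top s_j$, the first piece is indeed $O(1/\mu_j^2)$ by \eqref{eq:stepproductbound}, but the leftover $\tfrac{1}{2}|g_{m_j}^\top s_j|$ is only bounded by $\tfrac{1}{2}\|g_{m_j}\|\,\|s_j\|\le\|g_{m_j}\|/\mu_j$, which is not $O(1/\mu_j^2)$ unless $\|g_{m_j}\|=O(1/\mu_j)$; the descent character of $s_j$ fixes the sign of this leftover, not its size. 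The obstruction is real: for the exact Levenberg--Marquardt step with an exact model applied to $f(x)=x^2/2$ at $x_j=1$, one computes $f(x_j+s_j)-m_j(x_j+s_j)=-\tfrac{1}{2(1+\mu_j)}+\tfrac{1}{2(1+\mu_j)^2}$, which decays like $1/\mu_j$, so no argument can establish \eqref{eq:accmodelnextstep} with the absolute value for the full regularized model. What does hold --- because the offending term $-\tfrac{1}{2}\gamma_j\|s_j\|^2$ is nonpositive and can simply be dropped when bounding from above --- is the one-sided inequality $f(x_j+s_j)-m_j(x_j+s_j)\le\kappa_{efs}/\mu_j^2$, and that one-sided version is the only form invoked downstream (in Lemma~\ref{lemma:decreasegoodmodel} and in Case~2(b) of the proof of Theorem~\ref{theo:cvmuseries}). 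The clean resolution is therefore to prove and state the one-sided bound, not to try to force the regularization term through \eqref{eq:stepproductbound}.
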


\begin{proof}
	Using a Taylor expansion of the function $f$ around $x_j$ and the definition 
	of $m_j$, we have:
	\begin{eqnarray*}
		\left| f(x_j+s_j) - m_j(x_j+s_j) \right| &\le 
		&\left| f(x_j)+(J(x_j)^\top r(x_j))^\top s_j - m_j(x_j+s_j) \right| + 
		\frac{\nu}{2}\|s_j\|^2 \\
		&\le &\left| f(x_j) - m_j(x_j) \right| + 
		\left| \left(J(x_j)^\top r(x_j)-J_{m_j}^\top r_{m_j}\right)^\top s_j \right| 
		+ \frac{\|J_{m_j}^\top J_{m_j}\|+\nu}{2}\|s_j\|^2.
	\end{eqnarray*}
	By Assumptions~\ref{assum:Jacmodel} and \ref{assum:stepbounds}, 
	this leads to
	\begin{equation*}
		\left| f(x_j+s_j) - m_j(x_j+s_j) \right| \le 
		\frac{\kappa_{ef}}{\mu_j^2} + \frac{\kappa_{eg}}{\mu_j}\|s_j\| 
		+ \frac{\kappa_{J_m}^2+\nu}{2}\|s_j\|^2 
		\le \frac{\kappa_{ef}}{\mu_j^2}+ \frac{2\kappa_{eg}}{\mu_j^2} 
		+ \frac{2(\kappa_{J_m}^2+\nu)}{\mu_j^2},
	\end{equation*}
	hence the result.
\end{proof}

The next lemmas describe useful results that hold for any 
realization of Algorithm~\ref{alg:LM}: they will be instrumental 
in studying the behavior of the method in a probabilistic setting 
(see Section~\ref{subsec:keythm}).


\begin{lemma} \label{lemma:decreasegoodmodel}
	Let Assumptions~\ref{assum:fC11}, \ref{assum:Jacmodel}, 
	\ref{assum:modeldecrease}, and \ref{assum:stepbounds} hold for a 
	realization of Algorithm~\ref{alg:LM}, and consider its $j$-th iteration.
	If the model is $(\kappa_{ef},\kappa_{eg})$-first-order accurate and
	\begin{equation} \label{eq:mudecreasegoodmodel}
		\mu_j \ge  \max\left\{\kappa_{J_m}^2,
		\frac{8(\kappa_{ef}+\kappa_{efs})}{\eta_1 \theta_{fcd}}\right\}
		\frac{1}{\|J_{m_j}^\top r_{m_j}\|},
	\end{equation}
	then the trial step $s_j$ satisfies
	\begin{equation} \label{eq:decreasegoodmodel}
		\|r(x_j+s_j)\|^2 - \|r(x_j)\|^2 \; \le \; -\frac{\eta_1 \theta_{fcd}}{4}
		\frac{\|J_{m_j}^\top r_{m_j}\|}{\mu_j}.
	\end{equation}
\end{lemma}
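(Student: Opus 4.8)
The plan is to compare the true decrease $f(x_j+s_j)-f(x_j)$ against the model decrease, controlling the discrepancy through the accuracy bounds already at hand. First I would insert the model by the exact identity
\[
f(x_j+s_j)-f(x_j) = \bigl[f(x_j+s_j)-m_j(x_j+s_j)\bigr] - \bigl[m_j(x_j)-m_j(x_j+s_j)\bigr] + \bigl[m_j(x_j)-f(x_j)\bigr].
\]
The two bracketed accuracy terms are bounded in absolute value straight away: $|m_j(x_j)-f(x_j)|\le \kappa_{ef}/\mu_j^2$ by the first-order accuracy property~\eqref{eq:firstordaccfunc}, and $|f(x_j+s_j)-m_j(x_j+s_j)|\le \kappa_{efs}/\mu_j^2$ by Lemma~\ref{lemma:accmodelnextstep}, whose hypotheses are met here. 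This already gives $f(x_j+s_j)-f(x_j) \le (\kappa_{ef}+\kappa_{efs})/\mu_j^2 - \bigl[m_j(x_j)-m_j(x_j+s_j)\bigr]$.

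The next step is to bound the model decrease from below. Assumption~\ref{assum:modeldecrease} provides $m_j(x_j)-m_j(x_j+s_j)\ge \tfrac{\theta_{fcd}}{2}\,\|g_{m_j}\|^2/(\|J_{m_j}\|^2+\gamma_j)$, so the remaining work is to simplify the denominator. This is where the first branch of the hypothesis~\eqref{eq:mudecreasegoodmodel} enters: $\mu_j\ge \kappa_{J_m}^2/\|g_{m_j}\|$ is equivalent to $\gamma_j=\mu_j\|g_{m_j}\|\ge \kappa_{J_m}^2\ge \|J_{m_j}\|^2$ by Assumption~\ref{assum:Jacmodel}. Hence $\|J_{m_j}\|^2+\gamma_j\le 2\gamma_j=2\mu_j\|g_{m_j}\|$, so the model decrease is at least $\tfrac{\theta_{fcd}}{4}\,\|g_{m_j}\|/\mu_j$. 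Combined with the previous bound,
\[
f(x_j+s_j)-f(x_j) \le \frac{\kappa_{ef}+\kappa_{efs}}{\mu_j^2} - \frac{\theta_{fcd}}{4}\,\frac{\|g_{m_j}\|}{\mu_j}.
\]

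Finally I would dominate the $\mathcal{O}(1/\mu_j^2)$ error term by a fraction of the decrease using the second branch of~\eqref{eq:mudecreasegoodmodel}, namely $\mu_j\ge 8(\kappa_{ef}+\kappa_{efs})/(\eta_1\theta_{fcd}\|g_{m_j}\|)$. Rewriting it as $(\kappa_{ef}+\kappa_{efs})/\mu_j \le \tfrac{\eta_1\theta_{fcd}}{8}\|g_{m_j}\|$ shows $(\kappa_{ef}+\kappa_{efs})/\mu_j^2 \le \tfrac{\eta_1\theta_{fcd}}{8}\,\|g_{m_j}\|/\mu_j$, which leaves $f(x_j+s_j)-f(x_j)\le \bigl(\tfrac{\eta_1\theta_{fcd}}{8}-\tfrac{\theta_{fcd}}{4}\bigr)\|g_{m_j}\|/\mu_j$. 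The only point demanding care is the closing sign bookkeeping: one checks that $\tfrac{\eta_1\theta_{fcd}}{8}-\tfrac{\theta_{fcd}}{4}\le -\tfrac{\eta_1\theta_{fcd}}{8}$, which rearranges to $\eta_1\le 1$ and therefore holds since $\eta_1\in(0,1)$, yielding exactly~\eqref{eq:decreasegoodmodel}. I do not anticipate a genuine obstacle in this lemma; the subtlety is simply to use each branch of the lower bound on $\mu_j$ once—the first to collapse $\|J_{m_j}\|^2+\gamma_j$ into $2\gamma_j$, the second to absorb the accuracy error—and to track the constants so the final coefficient is precisely $\eta_1\theta_{fcd}/8$.
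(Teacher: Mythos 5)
Your proof is correct and follows essentially the same route as the paper's: the same three-term decomposition, the bounds $\kappa_{ef}/\mu_j^2$ and $\kappa_{efs}/\mu_j^2$ on the accuracy terms via \eqref{eq:firstordaccfunc} and Lemma~\ref{lemma:accmodelnextstep}, the first branch of \eqref{eq:mudecreasegoodmodel} to collapse $\|J_{m_j}\|^2+\gamma_j$ into $2\mu_j\|g_{m_j}\|$, and the second branch to absorb the error term. The only cosmetic difference is that the paper inserts the factor $\eta_1\le 1$ into the Cauchy decrease at the start, whereas you keep $\theta_{fcd}/4$ and verify $\tfrac{\eta_1\theta_{fcd}}{8}-\tfrac{\theta_{fcd}}{4}\le-\tfrac{\eta_1\theta_{fcd}}{8}$ at the end; both give the same constant.
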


\begin{proof}
	Since the model is $(\kappa_{ef},\kappa_{eg})$-first-order accurate, we have:
	\begin{eqnarray*}
		f(x_j+s_j) - f(x_j) &= 
		&f(x_j+s_j)-m(x_j+s_j) + m(x_j+s_j)-m_j(x_j) + m_j(x_j) - f(x_j) \\
		&\le &\frac{\kappa_{efs}}{\mu_j^2} + m(x_j+s_j)-m_j(x_j) 
		+ \frac{\kappa_{ef}}{\mu_j^2} \\
		&\le &\frac{\kappa_{ef}+\kappa_{efs}}{\mu_j^2} -\frac{\eta_1 \theta_{fcd}}{2}\,
		\frac{\|J_{m_j}^\top r_{m_j}\|^2}{\kappa_{J_m}^2+\gamma_j} 
		= \frac{\kappa_{ef}+\kappa_{efs}}{\mu_j^2} -\frac{\eta_1 \theta_{fcd}}{2}\,
		\frac{\|J_{m_j}^\top r_{m_j}\|^2}
		{\kappa_{J_m}^2+\mu_j\|J_{m_j}^\top r_{m_j}\|}, \\
	\end{eqnarray*}
	where we used the result of Lemma~\ref{lemma:accmodelnextstep} \revised{(where 
	$\kappa_{efs}$ is defined)} and 
	Assumption~\ref{assum:modeldecrease}.
	Using the first part of~\eqref{eq:mudecreasegoodmodel}, we then have 
	$\mu_j\|J_{m_j}^\top r_{m_j}\|  \ge \kappa_{J_m}^2$, and therefore
	\begin{eqnarray*}
		f(x_{j+1}) - f(x_j) 
		&\le 
		& \frac{\kappa_{ef}+\kappa_{efs}}{\mu_j^2} -\frac{\eta_1 \theta_{fcd}}{2}\,
		\frac{\|J_{m_j}^\top r_{m_j}\|^2}{2\mu_j\|J_{m_j}^\top r_{m_j}\|} \\
		&= &\frac{1}{\mu_j}\left[\frac{\kappa_{ef}+\kappa_{efs}}{\mu_j} - 
		\frac{\eta_1 \theta_{fcd}}{4}\,\|J_{m_j}^\top r_{m_j}\|\right] 
		\le 
		\frac{1}{\mu_j}\left[ -\frac{\eta_1\theta_{fcd}}{8}\|J_{m_{j}}^\top r_{m_j}\|\right],
	\end{eqnarray*}
	where the second part of the maximum in~\eqref{eq:mudecreasegoodmodel} was used 
	in the last line, yielding~\eqref{eq:decreasegoodmodel}.
\end{proof}

The next result is a consequence of Lemma~\ref{lemma:decreasegoodmodel}.

\begin{lemma} \label{lemma:decreasegoodmodeltruegrad}
	Let the assumptions of Lemma~\ref{lemma:decreasegoodmodel} hold. 
	If $m_j$ is $(\kappa_{ef},\kappa_{eg})$-first-order accurate and
	\begin{equation} \label{eq:mudecreasegoodmodeltruegrad}
		\mu_j \ge \left( \kappa_{eg} + \max\left\{\kappa_{J_m}^2,
		\frac{8(\kappa_{ef}+\kappa_{efs})}{\eta_1 \theta_{fcd}}\right\}\right)
		\frac{1}{\|J(x_j)^\top r(x_j)\|},
	\end{equation}
	then the trial step $s_j$ satisfies
	\begin{equation} \label{eq:decreasegoodmodeltruegrad}
		\|r(x_j+s_j)\|^2 - \|r(x_j)\|^2 \; \le \; 
		-C_1 \frac{\|J(x_j)^\top r(x_j)\|}{\mu_j},
	\end{equation}
	where $C_1   := \frac{\eta_1\theta_{fcd}}{4}\frac{\max\left\{\kappa_{J_m}^2,
		\tfrac{4(\kappa_{ef}+\kappa_{efs})}{\eta_1 \theta_{fcd}}\right\}}
		{\kappa_{eg} + \max\left\{\kappa_{J_m}^2,
		\tfrac{4(\kappa_{ef}+\kappa_{efs})}{\eta_1 \theta_{fcd}}\right\}}$.
	Moreover, 
	\begin{equation} \label{eq:decreasegoodmodeltruegradscaled}
		\|r(x_j+s_j)\|^{1/2^\icc} - \|r(x_j)\|^{1/2^\icc} \; \le \; 
		-\frac{C_1}{2^{\icc+1}} \frac{\|J(x_j)^\top r(x_j)\|}{\|r(x_j)\|^{\ipow}\mu_j},
	\end{equation}	
\end{lemma}

\begin{proof}
	Since the model is $(\kappa_{ef},\kappa_{eg})$-first-order accurate, we have
	\begin{equation} \label{eq:nabfgkap}
		\left\|J(x_j)^\top r(x_j)\right\| \; \le \; 
		\left\|J(x_j)^\top r(x_j) - J_{m_j}^\top r_{m_j}\right\| + 
		\left\|J_{m_j}^\top r_{m_j}\right\| \; \le \; 
		\frac{\kappa_{eg}}{\mu_j} + 
		\left\|J_{m_j}^\top r_{m_j}\right\|.
	\end{equation}
	Using~\eqref{eq:mudecreasegoodmodeltruegrad} to bound the left-hand side, 
	we obtain:
	\begin{eqnarray*}
		\frac{\kappa_{eg} + \max\left\{\kappa_{J_m}^2,
		\frac{8(\kappa_{ef}+\kappa_{efs})}{\eta_1 \theta_{fcd}}\right\}}{\mu_j} 
		&\le &\frac{\kappa_{eg}}{\mu_j} + 
		\left\|J_{m_j}^\top r_{m_j}\right\| \\
	\end{eqnarray*}
	which gives $	\mu_j \ge  \max\left\{\kappa_{J_m}^2,
		\frac{8(\kappa_{ef}+\kappa_{efs})}{\eta_1 \theta_{fcd}}\right\}
		\frac{1}{\|J_{m_j}^\top r_{m_j}\|}$. We are thus in 
	the assumptions of Lemma~\ref{lemma:decreasegoodmodel}, 
	and~\eqref{eq:decreasegoodmodel} holds.
	Using the fact that the model is 
	$(\kappa_{ef},\kappa_{eg})$-first-order accurate together 
	with~\eqref{eq:mudecreasegoodmodeltruegrad} and~\eqref{eq:nabfgkap}, we have:
	\begin{equation*}
		\left\| J(x_j)^\top r(x_j) \right\| 
		\le \frac{\kappa_{eg}}{\mu_j} + 
		\left\| J_{m_j}^\top r_{m_j} \right\| 
		\le \frac{\kappa_{eg}}{\kappa_{eg} +  \max\left\{\kappa_{J_m}^2,
		\tfrac{8(\kappa_{ef}+\kappa_{efs})}{\eta_1 \theta_{fcd}}\right\}}
		\left\| J(x_j)^\top r(x_j) \right\|  
		+ \left\| J_{m_j}^\top r_{m_j} \right\| ,
	\end{equation*}
	leading to
	\begin{equation*}
		\|J_{m_j}^\top r_{m_j}\|
		\ge  \frac{\max\left\{\kappa_{J_m}^2,
		\tfrac{8(\kappa_{ef}+\kappa_{efs})}{\eta_1 \theta_{fcd}}\right\}}
		{\kappa_{eg} + \max\left\{\kappa_{J_m}^2,
		\tfrac{8(\kappa_{ef}+\kappa_{efs})}{\eta_1 \theta_{fcd}}\right\}}
		\|J(x_j)^\top r(x_j)\|.
	\end{equation*}
	Combining this relation with~\eqref{eq:decreasegoodmodel} finally 
	gives~\eqref{eq:decreasegoodmodeltruegrad}.
	To obtain~\eqref{eq:decreasegoodmodeltruegradscaled}, we simply invoke 
	\revised{Lemma~\ref{lemma:oursab}} starting from~\eqref{eq:decreasegoodmodeltruegrad}.
 \end{proof}

\begin{lemma}  \label{lemma:condmodelgraditsucc}
	Let Assumptions~\ref{assum:fC11}, \ref{assum:Jacmodel}, 
	\ref{assum:modeldecrease} and \ref{assum:stepbounds} hold. Consider the 
	$j$-th iteration of a realization of Algorithm~\ref{alg:LM}.
	\revised{Suppose that $m_j$ is $(\kappa_{ef},\kappa_{eg})$-first-order 
	accurate, $(r^0_j,r^s_j)$ is $\varepsilon_f$-accurate, and that}
	\begin{equation} \label{eq:condmodelgraditsucc}
		\mu_j \; \ge \; \max\left\{
		\frac{\alpha+\sqrt{\alpha^2+4\alpha\kappa_{J_m}^2(1-\eta_1)}}
		{2(1-\eta_1)} ,\eta_2 \right\} \frac{1}{\|J_{m_j}^\top r_{m_j}\|} :=
		\frac{\kappa_{\mu g}}{\|J_{m_j}^\top r_{m_j}\|},
	\end{equation}
	holds, where 
	$\alpha  := \varepsilon_f+\kappa_{eg}+\nu+ 3\kappa_{J_m}^2+\theta_{in}$.
	Then, the $j$-th iteration is successful (i.e.  $\rho_j \ge \eta_1$ and 
	$\|J_{m_j}^\top r_{m_j}\| \geq \frac{\eta_2}{\mu_j}$). 
\end{lemma}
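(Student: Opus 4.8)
The plan is to check separately the two conditions that define a successful iteration in Algorithm~\ref{alg:LM}, namely $\rho_j \ge \eta_1$ and $\|g_{m_j}\| \ge \eta_2/\mu_j$. The second one is immediate: since $x_j$ is not critical and the model is accurate, $\|g_{m_j}\|>0$, and the factor $\eta_2$ inside the maximum in \eqref{eq:condmodelgraditsucc} forces $\mu_j \ge \eta_2/\|g_{m_j}\|$, i.e. $\mu_j\|g_{m_j}\|\ge \eta_2$, which is exactly the gradient condition. The whole substance of the proof is therefore to establish $\rho_j \ge \eta_1$.

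For the ratio I would use the standard trust-region decomposition. Writing $\mathrm{pred}_j := m_j(x_j)-m_j(x_j+s_j)>0$, one has $\rho_j \ge \eta_1$ as soon as $\mathrm{pred}_j-(f_j^0-f_j^1) \le (1-\eta_1)\,\mathrm{pred}_j$, i.e. once the gap between predicted and (estimated) achieved reduction is dominated by a fraction of the predicted reduction. I would bound this gap by expanding the true reduction $f(x_j)-f(x_j+s_j)$ with a Taylor expansion of $f$ around $x_j$ (Assumption~\ref{assum:fC11}) and comparing it with $\mathrm{pred}_j$, while accounting for the estimation errors $|f(x_j)-f_j^0|,\,|f(x_j+s_j)-f_j^1|\le \varepsilon_f/\mu_j^2$ (Definition~\ref{defi:accfunc}) and the model-value error $|f(x_j)-m_j(x_j)|\le \kappa_{ef}/\mu_j^2$ of~\eqref{eq:firstordaccfunc}. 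After cancellation, the remaining discrepancy consists of the gradient-mismatch term $(\nabla f(x_j)-g_{m_j})^\top s_j$, the second-order Taylor remainder, and the model's own linear-plus-regularization contribution $s_j^\top(\gamma_j s_j+g_{m_j})$. These are controlled respectively by~\eqref{eq:firstordaccgrad} together with the step bound~\eqref{eq:stepsizebound}, by $\nu$ together with~\eqref{eq:stepsizebound}, and by the product bound~\eqref{eq:stepproductbound} (with $\kappa_{J_m}$ from Assumption~\ref{assum:Jacmodel}); this last term is where $\theta_{in}$ and the $\kappa_{J_m}^2$ contributions enter. Collecting everything yields $\mathrm{pred}_j-(f_j^0-f_j^1)\le \alpha/\mu_j^2$ with $\alpha=\varepsilon_f+\kappa_{eg}+\nu+5\kappa_{J_m}^2+2\theta_{in}$ (alternatively, Lemma~\ref{lemma:accmodelnextstep} supplies a ready-made bound on the model-at-new-point error, at the cost of a looser constant).

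It then remains to show $\alpha/\mu_j^2 \le (1-\eta_1)\,\mathrm{pred}_j$. I would lower-bound $\mathrm{pred}_j$ using the Cauchy-decrease condition~\eqref{eq:modeldecrease} of Assumption~\ref{assum:modeldecrease}, together with $\|J_{m_j}\|\le\kappa_{J_m}$ and $\gamma_j=\mu_j\|g_{m_j}\|$, obtaining a bound of the form $\mathrm{pred}_j \ge c\,\|g_{m_j}\|^2/(\kappa_{J_m}^2+\gamma_j)$. Substituting and clearing the denominator turns the required inequality into a quadratic inequality in the single variable $\gamma_j=\mu_j\|g_{m_j}\|$, of the schematic form $(1-\eta_1)\gamma_j^2 \ge \alpha(\gamma_j+\kappa_{J_m}^2)$. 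The first argument of the maximum in~\eqref{eq:condmodelgraditsucc} is precisely the larger root of this quadratic (one checks it solves $(1-\eta_1)t^2-\alpha t-\alpha\kappa_{J_m}^2=0$), so the hypothesis $\mu_j\|g_{m_j}\|\ge\kappa_{\mu g}$ guarantees the inequality and hence $\rho_j\ge\eta_1$, completing the proof.

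The main obstacle is the bookkeeping of constants in the discrepancy estimate, and in particular the treatment of the regularization term $\tfrac12\gamma_j\|s_j\|^2$ appearing in the model reduction: a priori this term is of order $\|g_{m_j}\|/\mu_j$ rather than $1/\mu_j^2$, and it is exactly the product bound~\eqref{eq:stepproductbound} that lets one rewrite and absorb it so that the full discrepancy is genuinely $\mathcal{O}(1/\mu_j^2)$. Matching the assembled constant to the precise value of $\alpha$, and thereby identifying the threshold with the root $\kappa_{\mu g}$, is the only delicate point; once the quadratic is in hand the conclusion is routine.
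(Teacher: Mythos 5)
Your overall strategy --- reduce success to a single inequality between the prediction error and a fraction of the predicted decrease, bound that error using the $\varepsilon_f$-accuracy of the estimates, the $\kappa_{eg}$-accuracy of the gradient, the Lipschitz constant $\nu$, the step bounds of Assumption~\ref{assum:stepbounds}, and the Cauchy decrease, then read off $\kappa_{\mu g}$ as the larger root of a quadratic in $\gamma_j=\mu_j\|g_{m_j}\|$ --- is exactly the paper's, as is your one-line treatment of the condition $\|g_{m_j}\|\ge\eta_2/\mu_j$. But there is a genuine algebraic gap in the middle step. You normalize by $\mathrm{pred}_j$ and work with $1-\rho_j$, yet you claim the residual discrepancy contains both the full gradient-mismatch term $(\nabla f(x_j)-g_{m_j})^\top s_j$ \emph{and} the full product term $s_j^\top(\gamma_j s_j+g_{m_j})$. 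These cannot both be extracted from $m_j(x_j)-m_j(x_j+s_j)-f_j^0+f_j^1$: the model reduction contains only one copy of $-g_{m_j}^\top s_j$, and if you spend it cancelling $\nabla f(x_j)^\top s_j$ from the Taylor expansion, what remains of the regularization is $-\tfrac{1}{2}\gamma_j\|s_j\|^2$, not a multiple of $s_j^\top(\gamma_j s_j+g_{m_j})$; forcing the product term to appear leaves an uncancelled $\tfrac{1}{2}g_{m_j}^\top s_j$, which is of order $\gamma_j/\mu_j^2=\|g_{m_j}\|/\mu_j$ and is \emph{not} $\mathcal{O}(1/\mu_j^2)$. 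So the claimed bound $\mathrm{pred}_j-(f_j^0-f_j^1)\le\alpha/\mu_j^2$ with $\alpha=\varepsilon_f+\kappa_{eg}+\nu+5\kappa_{J_m}^2+2\theta_{in}$ does not follow from the decomposition as you describe it.

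The paper's resolution is precisely the device your write-up is missing: it bounds $\left|1-\tfrac{\rho_j}{2}\right|$ rather than $|1-\rho_j|$. Halving the estimated decrease turns the numerator into $m_j(x_j)-m_j(x_j+s_j)-\tfrac{1}{2}f_j^0+\tfrac{1}{2}f_j^1$, whose linear part $-g_{m_j}^\top s_j$ can then be split into two halves: one cancels $\tfrac{1}{2}\nabla f(x_j)^\top s_j$ (producing the gradient-mismatch term), and the other combines with $-\tfrac{1}{2}\gamma_j\|s_j\|^2$ into exactly $-\tfrac{1}{2}s_j^\top(g_{m_j}+\gamma_j s_j)$, the quantity that \eqref{eq:stepproductbound} controls; concluding $\left|1-\tfrac{\rho_j}{2}\right|<1-\eta_1$ still gives $\rho_j>2\eta_1>\eta_1$. (Alternatively, sticking with $1-\rho_j$ one may simply discard the nonpositive terms $-\tfrac{1}{2}\gamma_j\|s_j\|^2$ and $-\tfrac{1}{2}s_j^\top J_{m_j}^\top J_{m_j}s_j$ in the upper bound, but that yields a different constant than $\alpha$ and hence a different threshold than the stated $\kappa_{\mu g}$.) Everything after this point in your argument --- the Cauchy-decrease lower bound, the reduction to $(1-\eta_1)\gamma_j^2-\alpha\gamma_j-\alpha\kappa_{J_m}^2\ge 0$, and the identification of $\kappa_{\mu g}$ with its larger root --- matches the paper.
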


\begin{proof}
To simplify the notations, we will omit the indices $j$ in the proof. By 
definition of the ratio $\rho$ and the model $m$, we have:
\begin{eqnarray*}
	\left|1-\frac{\rho}{2}\right| = \left|1-
	\frac{1}{2}\frac{f^0-f^s}{m(x)-m(x+s)\revised{-\tfrac{\gamma}{2}\|s\|^2}} \right|
	&= &\frac{\left| m(x)-m(x+s) -\revised{\tfrac{\gamma}{2}\|s\|^2} - \tfrac{1}{2}f^0 + \tfrac{1}{2}f^s\right|}
	{\left|m(x)-m(x+s)\revised{-\tfrac{\gamma}{2}\|s\|^2} \right|} \\
	&= &\frac{\left| -g_m^\top s - \tfrac{1}{2}s^\top(J_m^\top J_m + \gamma I)s 
	- \tfrac{1}{2}f^0 + \tfrac{1}{2}f^s \right|}{\left|m(x)-m(x+s)\revised{-\tfrac{\gamma}{2}\|s\|^2} \right|} \\
	&= &\frac{\left| \tfrac{1}{2}\left( f^s-f^0 - g_m^\top s - 
	s^\top J_m^\top J_m s \right) - \tfrac{1}{2}s^\top(g_m+\gamma s) \right|}
	{\left| m(x) - m(x+s)\revised{-\tfrac{\gamma}{2}\|s\|^2}  \right|} \\
	&\le &\frac{\tfrac{1}{2}\left|f^s-f^0 - g_m^\top s - 
	s^\top J_m^\top J_m s \right|+ \tfrac{1}{2}|s^\top(g_m+\gamma s)|}
	{\left| m(x) - m(x+s) \revised{-\tfrac{\gamma}{2}\|s\|^2} \right|}.
\end{eqnarray*}
The first term in the numerator can be bounded using a Taylor expansion of 
$f(x+s)$. Indeed,
\begin{eqnarray*}
	\left|f^s-f^0 - g_m^\top s - s^\top J_m^\top J_m s \right|
	&= &\left| f^s - f(x+s) + f(x) - f^0 + f(x+s) - f(x) - g_m^\top s 
	-\tfrac{1}{2}s^\top J_m^\top J_m s \right| \\
	&\le &\left| f^s - f(x+s)\right| + \left|f(x) - f^0 \right| 
	+\left| f(x+s) - f(x) - g_m^\top s 
	-\tfrac{1}{2}s^\top J_m^\top J_m s \right| \\
	&\le &\left| f^s - f(x+s) \right| + \left|f(x) - f^0 \right|  
	+ \left|\left[\nabla f(x) - g_m\right]^\top s \right| 
	+ \frac{\nu + \|J_m^\top J_m\|}{2}\|s\|^2.
\end{eqnarray*}
Recalling that $\nabla f(x)=J(x)^\top r(x)$, $g_m=J_m^\top r_m$ and 
using the accuracy properties of the model and function estimates as 
well as Assumption~\ref{assum:Jacmodel}, we obtain:
\begin{equation*}
	\left|f^s-f^0 - g_m^\top s - s^\top J_m^\top J_m s \right| \; \le \; 
	\frac{2\varepsilon_f}{\mu^2} + \frac{\kappa_{eg}\|s\|}{\mu}  +
	\frac{\nu+\kappa_{J_m}^2}{2}\|s\|^2.
\end{equation*}
Thus, we have
\begin{eqnarray*}
	\left|1-\frac{\rho}{2}\right| &\le 
	&\frac{\tfrac{\varepsilon_f}{\mu^2} + \tfrac{\kappa_{eg}\|s\|}{2\mu}  +
	\tfrac{\nu+\kappa_{J_m}^2}{4}\|s\|^2 + 
	\tfrac{1}{2}\left|s^\top(g_m+\gamma s)\right|}{m(x)-m(x+s) \revised{-\tfrac{\gamma}{2}\|s\|^2}}.
\end{eqnarray*}
Using Assumption~\ref{assum:stepbounds} on the numerator and 
Assumption~\ref{assum:modeldecrease} on the denominator, we arrive at
\begin{eqnarray*}
	\left|1-\frac{\rho}{2}\right| 
	\le \frac{\tfrac{\varepsilon_f}{\mu^2} + \tfrac{\kappa_{eg}}{\mu^2} 
	+ \tfrac{\nu+\kappa_{J_m}^2}{\mu^2} + 
	\tfrac{2 \kappa_{J_m}^2 + \theta_{in}}{\mu^2}}{m(x) - m(x+s)\revised{-\tfrac{\gamma}{2}\|s\|^2}} 
	&\le &\frac{\left(\varepsilon_f+\kappa_{eg}+\nu+ 3\kappa_{J_m}^2 
	+ \theta_{in}\right)\tfrac{1}{\mu^2}}{\tfrac{\theta_{fcd}}{2}
	\tfrac{\|J_m^\top r_m\|^2}{\|J_m\|^2+\gamma}} \\
	&= &\frac{\left(\varepsilon_f+\kappa_{eg}+\nu+ 3\kappa_{J_m}^2 
	+ \theta_{in}\right)\tfrac{\|J_m^\top r_m\|^2}{\gamma^2}}{\tfrac{\theta_{fcd}}{2}
	\tfrac{\|J_m^\top r_m\|^2}{\kappa_{J_m}^2+\gamma}} \\
	&= &\left(\varepsilon_f+\kappa_{eg}+\nu+ 3\kappa_{J_m}^2 
	+ \theta_{in}\right)\frac{\kappa_{J_m}^2+\gamma}{\gamma^2} 
	= \alpha\,\frac{\kappa_{J_m}^2+\gamma}{\gamma^2},
\end{eqnarray*}
\revised{where the last equality uses the definition of $\alpha$ from
the lemma's statement.}

As a result, we have
\begin{eqnarray*}
	\left| 1 - \frac{\rho}{2} \right| \ge 1- \eta_1 &\Rightarrow 
	&\alpha\frac{\kappa_{J_m}^2+\gamma}{\gamma^2} \ge 
	1-\eta_1 \\
	&\Leftrightarrow &0 \ge (1-\eta_1)\gamma^2 - 
	\alpha\gamma - \alpha\,\kappa_{J_m}^2.
\end{eqnarray*}

Since the right-hand side is a second-order polynomial in $\gamma$, this gives
	\[
		\gamma \le \frac{\alpha+\sqrt{\alpha^2+4\alpha\kappa_{J_m}^2(1-\eta_1)}}
		{2(1-\eta_1)} \; \Leftrightarrow \;
		\mu \le \frac{\alpha+\sqrt{\alpha^2+4\alpha\kappa_{J_m}^2(1-\eta_1)}}
		{2(1-\eta_1)}\frac{1}{\|J_m^\top r_m\|}.
	\]
But this contradicts~\eqref{eq:condmodelgraditsucc}, from which we 
conclude that we necessarily have 
$\left| 1 - \frac{\rho}{2}\right| < 1- \eta_1$, and thus $\rho > \eta_1$. 
Since $\|J_m^\top r_m\| \ge \frac{\eta_2}{\mu}$ as a direct consequence 
of~\eqref{eq:condmodelgraditsucc}, the iteration is a successful one, and the 
parameter $\mu$ is not increased.
\end{proof}

The next lemma shows that having accurate function estimates but 
inaccurate models still leads to a decrease in the residual on successful iterations.

\begin{lemma} \label{lemma:decreasegoodestim}
	Let Assumptions~\ref{assum:fC11}, \ref{assum:Jacmodel}, 
	\ref{assum:modeldecrease}, and \ref{assum:stepbounds} hold. 
	\revised{Given a realization of Algorithm~\ref{alg:LM}, let $j$ be the index 
	of an unsuccessful iteration.}
%
	Suppose that $(r^0_j,r^s_j)$ is $\varepsilon_f$-accurate, \revised{and suppose}
	\begin{equation} \label{eq:eta2bound}
		\eta_2 \ge \max\left\{\kappa_{J_m}^2,\frac{8\varepsilon_f}{\eta_1\theta_{fcd}}\right\}
	\end{equation}
	Then, one has:
	\begin{equation} \label{eq:decreasegoodestim}
		\|r(x_j+s_j)\|^2 - \|r(x_j)\|^2 \; \le \; -\frac{C_2}{\mu_j^2}.
	\end{equation}
	where $C_2  :=\tfrac{\eta_1\eta_2\theta_{fcd}}{2}-4\varepsilon_f > 0$.
	Moreover, \revised{if $\|r(x_j)\| > 0$, we have}
	\begin{equation} \label{eq:decreasegoodestimscaled}
		\|r(x_j+s_j)\|^{1/2^\icc} - \|r(x_j)\|^{1/2^\icc} \; \le \; 
		-\frac{C_2}{2^{\icc+1}} \frac{1}{\mu_j^2 \|r(x_j)\|^{\ipow}}.
	\end{equation}
\end{lemma}

\begin{proof}
	By definition of a successful iteration and using the accuracy properties 
	of the models and the estimates, we have
	\begin{eqnarray*}
		f(x_{j+1}) - f(x_j) = f(x_j+s_j) - f(x_j) &= 
		&\frac{1}{2}\left[\|r(x_j+s_j)\|^2-\|r^s_j\|^2 
		+ \|r^s_j\|^2 -\|r^0_j\|^2 + \|r^0_j\|^2 - \|r(x_j)\|^2 \right] \\
		&\le &2\frac{\varepsilon_f}{\mu_j^2} + \frac{1}{2}\|r^s_j\|^2 - \frac{1}{2}\|r^0_j\|^2 \\
		&\le &2\frac{\varepsilon_f}{\mu_j^2} + \eta_1\left(m(x_j+s_j)-m(x_j)\right) \\
		&\le &2\frac{\varepsilon_f}{\mu_j^2} -\frac{\eta_1 \theta_{fcd}}{2}\,
		\frac{\|J_{m_j}^\top r_{m_j}\|^2}{\kappa_{J_m}^2+\mu_j\|J_{m_j}^\top r_{m_j}\|} \\
		&\le &2\frac{\varepsilon_f}{\mu_j^2} -\frac{\eta_1 \theta_{fcd}}{2}\,
		\frac{\|J_{m_j}^\top r_{m_j}\|^2}{\eta_2+\mu_j\|J_{m_j}^\top r_{m_j}\|} 
		\qquad \mbox{since $\eta_2 \ge \kappa_{J_m}^2$.}
	\end{eqnarray*}
	Since the iteration is successful, we have $\mu_j\|J_{m_j}^\top r_{m_j}\| \ge \eta_2$, leading 
	to
	\begin{equation*}
	\|r(x_{j+1})\|^2 - \|r(x_j)\|^2 \le 4\frac{\varepsilon_f}{\mu_j^2} 
	-\frac{\eta_1 \theta_{fcd}}{2}\,\frac{\|J_{m_j}^\top r_{m_j}\|}{\mu_j} 
	\le 4\frac{\varepsilon_f}{\mu_j^2} -\frac{\eta_1\eta_2 \theta_{fcd}}{2}\,
	\frac{1}{\mu_j^2} 
	= -\frac{C_2}{\mu_j^2}.
	\end{equation*}
	(Note that the positivity of $C_2$ comes from~\eqref{eq:eta2bound}). 
	This establishes the desired result~\eqref{eq:decreasegoodestim}; by applying 
	\revised{Lemma~\ref{lemma:oursab}}, we then arrive at~\eqref{eq:decreasegoodestimscaled}.
 \end{proof}

To end this section, we formalize our assumptions regarding the probabilistic 
properties satisfied by our method.

\begin{assumption} \label{assum:models}
	The random model sequence $\{M_j\}$ is $p$-probabilistically 
	$\{\kappa_{ef},\kappa_{eg}\}$-first-order accurate for some $p \in (0,1]$, 
	$\kappa_{ef} > 0$, and $\kappa_{eg} > 0$.
\end{assumption}

\begin{assumption} \label{assum:festimates}
	The sequence of random function estimates $\{(R^0_j,R^s_j)\}_j$ is 
	$q$-probabilistically $\varepsilon_f$-accurate for some $q \in (0,1]$ 
	and $\varepsilon_f > 0$.
\end{assumption}
\begin{assumption} \label{assum:eta2}
	The constant $\eta_2$ is chosen such as  
	\begin{equation} \label{eq:condeta2cvmuseries}
		\eta_2 \; \ge \; \max\left\{ \kappa_{J_m}^2, 
		\frac{6(\kappa_{ef}+\kappa_{efs})}{\theta_{fcd}},
		\frac{8\varepsilon_f}{\eta_1\theta_{fcd}}\right\}.
	\end{equation}
\end{assumption}
In the rest of the paper, we will assume that $pq \neq 1$, since if $pq=1$, then for 
for every $j$, we have $p_j^* \; = \; P(\Uj_j | \mathcal{F}^M_{j-1}) \; = \; p   \; = \; 
	q_j^* \; = \; P(\Vj_j | \mathcal{F}^{M \cdot R}_{j-1}) \;  \; = \; \; q  \; = \; 1. $
and the behavior of the algorithm reduces to that of a deterministic 
method. Note that we still allow $p$ or $q$ to be equal to 1.

\subsection{A key property} \label{subsec:keythm}

Similarly to existing analyzes, the main task in deriving our complexity 
result consists in proving the following theorem.

\begin{theorem} \label{theo:cvmuseries}
	Let Assumptions~\ref{assum:fC11}, \ref{assum:Jacmodel}, \ref{assum:modeldecrease}, \ref{assum:stepbounds}
	and \ref{assum:eta2} hold. Suppose that Assumptions~\ref{assum:models} and 
	\ref{assum:festimates} are also satisfied, with the probabilities $p$ and $q$ 
	chosen in a way specified later on. 
	Then,
	\begin{equation} \label{eq:cvmuseries}
		\Pr\left( \sum_{j=0}^{\infty}\,
		\frac{1}{\Mu_j^2\,\|r(X_j)\|^{\ipow}} < \infty 
		\ \middle| \|r(X_j)\|^{\ipow} > 0\ \forall j\ 
		\right) \; = \; 1.
	\end{equation}
\end{theorem}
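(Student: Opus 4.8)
The plan is to adapt the stochastic Lyapunov-function argument of the STORM analysis \cite{RChen_MMenickelly_KScheinberg_2018,JBlanchet_CCartis_MMenickelly_KScheinberg_2018}, using $\Phi_j$ from \eqref{eq:Phi_j} as the potential. I would show that, for the choice of $\tau$ in \eqref{eq:taumuseries} and for $p,q$ sufficiently close to $1$, there is a constant $\theta>0$ with
\[
\E{\Phi_{j+1}\mid \mathcal{F}^{M\cdot F}_{j-1}} \;\le\; \Phi_j - \frac{\theta}{\Mu_j^2}
\qquad\text{for all } j.
\]
Granting this, the result follows quickly: since $f\ge 0$ and $\Mu_j^{-2}>0$ we have $\Phi_j\ge 0$, so taking total expectations, telescoping, and using $\Phi_{N+1}\ge 0$ yields $\theta\sum_{j=0}^{N}\E{\Mu_j^{-2}}\le \Phi_0<\infty$; letting $N\to\infty$ and applying Tonelli's theorem gives $\E{\sum_j \Mu_j^{-2}}<\infty$, hence $\sum_j \Mu_j^{-2}<\infty$ almost surely, which is \eqref{eq:cvmuseries}.

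Everything thus reduces to the one-step inequality, which I would obtain by conditioning on $\mathcal{F}^{M\cdot F}_{j-1}$ (so that $X_j$, $\Mu_j$ and $\|\nabla f(X_j)\|$ are fixed) and writing $\Phi_{j+1}-\Phi_j=\tau\,(f(X_{j+1})-f(X_j))+(1-\tau)(\Mu_{j+1}^{-2}-\Mu_j^{-2})$. The second term is at most $(\lambda^2-1)\,\Mu_j^{-2}$ on successful iterations and equals $-(1-\lambda^{-2})\,\Mu_j^{-2}$ on unsuccessful ones, while the first term vanishes whenever the step is rejected. I would then split on whether $\Mu_j\|\nabla f(X_j)\|\ge \zeta$. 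In this large-$\Mu$ regime the definition of $\zeta$ in \eqref{eq:zetamuseries} places $\Mu_j$ above the thresholds of Lemmas~\ref{lemma:decreasegoodmodel}--\ref{lemma:condmodelgraditsucc}, so that on the model-accuracy event $U_j$ \emph{any accepted step} yields the genuine decrease $f(X_{j+1})-f(X_j)\le -C_1\|\nabla f(X_j)\|/\Mu_j\le -C_1\zeta/\Mu_j^2$ of Lemma~\ref{lemma:decreasegoodmodeltruegrad}, whereas a rejected step only increases $\Mu_j$; in both subcases $\Phi$ decreases once $\tau$ satisfies the first bound in \eqref{eq:taumuseries}. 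In the complementary small-$\Mu$ regime every increment is of order $\Mu_j^{-2}$: on the estimate-accuracy event $V_j$ a successful step gives $f(X_{j+1})-f(X_j)\le -C_2/\Mu_j^2$ by Lemma~\ref{lemma:decreasegoodestim} (which is where Assumption~\ref{assum:eta2} enters), so $\Phi$ again decreases once $\tau$ satisfies the remaining two bounds in \eqref{eq:taumuseries}.

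The main obstacle is the treatment of ``false successes'', i.e.\ steps accepted while the governing information is inaccurate, since there the true objective may \emph{increase}. When the model is accurate ($U_j$) and $\Mu_j$ is large, Lemma~\ref{lemma:decreasegoodmodeltruegrad} already rules this out regardless of the estimates; when the estimates are accurate ($V_j$), the acceptance test forces $f(X_{j+1})-f(X_j)\le 2\varepsilon_f/\Mu_j^2$, again harmless. The only dangerous event is that both the model and the estimates are inaccurate on a successful step, in which case a Taylor expansion together with the step bound \eqref{eq:stepsizebound} only gives $f(X_{j+1})-f(X_j)\le 2\|\nabla f(X_j)\|/\Mu_j+2\nu/\Mu_j^2$. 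In the small-$\Mu$ regime this is $O(\Mu_j^{-2})$ and is absorbed; in the large-$\Mu$ regime it is of order $\|\nabla f(X_j)\|/\Mu_j$ --- \emph{exactly the order of the guaranteed decrease on $U_j$}. This matching of orders is the crux: after taking conditional expectations and using $P(U_j\mid\mathcal{F}^{M\cdot F}_{j-1})\ge p$ and $P(V_j\mid\mathcal{F}^{M\cdot F}_{j-1/2})\ge q$ (the nested conditioning of Definitions~\ref{defi:probafirstordacc}--\ref{defi:probaccfun} is precisely what legitimises pulling these probabilities out), the favourable $-p\,\tau C_1\|\nabla f(X_j)\|/\Mu_j$ contribution dominates the unfavourable $(1-p)\,2\tau\|\nabla f(X_j)\|/\Mu_j$ one as soon as $p$ (and, symmetrically in the small-$\Mu$ regime, $q$) is close enough to $1$ relative to $C_1,C_2,\zeta,\nu$. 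Fixing $p,q$ this way --- the ``probabilities chosen in a way specified later on'' of the statement --- leaves a strictly positive slack, which after using $\|\nabla f(X_j)\|\ge \zeta/\Mu_j$ in the large-$\Mu$ regime is of order $\Mu_j^{-2}$ and provides the required $\theta>0$, completing the one-step inequality and hence the proof.
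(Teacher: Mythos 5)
Your proposal is correct and follows essentially the same route as the paper: the same potential $\Phi_j$, the same one-step expected-decrease target, the same split on $\|\nabla f(X_j)\|\gtrless \zeta/\Mu_j$ with the four accuracy sub-cases, the same identification of the doubly-inaccurate successful step as the only dangerous event whose $O(\|\nabla f(X_j)\|/\Mu_j)$ increase must be dominated by the $pq$-weighted decrease, and the same resulting conditions on $p$ and $q$. The only cosmetic difference is that you conclude from the one-step inequality by telescoping total expectations and Tonelli, whereas the paper invokes a standard almost-sure convergence result for nonnegative supermartingale-type processes; both are valid.
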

%
%
%
%
	Our proof technique (available in Appendix) is adapted from that in 
	the trust-region setting~\cite[Theorem 4.11]{RChen_MMenickelly_KScheinberg_2018}
	\cite[Lemma 4]{JLarson_SCBillups_2016}. It relies on a Lyapunov 
	function combining the parameter $\Mu_j$ and a measure of stationarity. 
	Previous analyzes, including an earlier\revised{, unpublished} version of this 
	paper~\cite{EBergou_YDiouane_VKungurtsev_CWRoyer_2018}, considered the 
	sequence
	\begin{equation} \label{eq:oldPhi_j}
		\tau f(X_j) + \frac{1-\tau}{\Mu_j^2}
	\end{equation}
	for an appropriately chosen $\tau \in (0,1)$.
	In order to employ our 
	scaled stationarity criterion, we fix an integer 
	$\icc \in \mathbb{N} \cup \{-1\}$ and define
	\begin{eqnarray}  \label{eq:Phi_j}
		\Phi_j &  := &  \tau \|r(X_j)\|^{1/2^\icc}
		+ \frac{1-\tau}{\Mu_j^2 \|r(X_j)\|^{\ipow}}.
	\end{eqnarray}
	When $\icc=-1$, we \revised{recover the} choice~\eqref{eq:oldPhi_j} up to 
	a constant factor. As we will see, the more generic 
	definition~\eqref{eq:Phi_j} is well suited to our use of the scaled 
	gradient~\eqref{eq:scaledgrad}.

	The proof requires $\tau \in (0,1)$ to be chosen such that
	\begin{equation} \label{eq:taumuseries}
		\frac{\tau}{1-\tau} \; > \;	\max\left\{ 
		\frac{2^{\icc+1}\left(\lambda^{2}-\tfrac{1}{\lambda^2}\right)}{C_1 \zeta}, 
		\frac{2^{\icc+1}\left(\lambda^{2}-\tfrac{1}{\lambda^2}\right)}{C_2},
		\frac{2^{\icc+2}\left(\lambda^{2}-\tfrac{1}{\lambda^2}\right)}{\kappa_{ef}+\kappa_{efs}}\right\}
	\end{equation}		
	where $\zeta$ is a parameter such that
	\begin{equation} \label{eq:zetamuseries}
		\zeta \; \ge \; \left(\kappa_{eg} + \max\left\{\kappa_{\mu g},
		\frac{8(\kappa_{ef}+\kappa_{efs})}{\eta_1 \theta_{fcd}},
		\kappa_{J_m}^2,\eta_2 \right\}\right).
	\end{equation}	
	In addition, the probabilities $p$ and $q$ are required to satisfy:
	\begin{equation}  \label{eq:condprobaspq1}
		\frac{pq-1/2}{(1-p)(1-q)} \ge \frac{C_3}{C_1}, 
		\revised{
		\quad \mbox{where} \quad C_3:= 4\left(1+\tfrac{\nu}{\zeta}\right).
		}
	\end{equation}	
	\revised{The bound~\eqref{eq:condprobaspq1}} implies that $pq \ge 1/2$, as well as
	\begin{equation} \label{eq:condprobaspq2}
		(1-p)(1-q) \le \frac{(1-\tau)\left(1-\tfrac{1}{\lambda^2}\right)}
		{2\left(\tau C_3\zeta + (1-\tau)(\lambda^{2}-1)\right)}.
	\end{equation}
	We remark that conditions~\eqref{eq:condprobaspq1} and~\eqref{eq:condprobaspq2} 
	will be satisfied for $p$ and $q$ sufficiently close to $1$; we also point 
	out that when $q=1$, these conditions would essentially reduce to 
	$p \ge 1/2$~\cite{EBergou_SGratton_LNVicente_2016}.
	
	Provided conditions~\eqref{eq:taumuseries}--\eqref{eq:condprobaspq2} hold, 
	Theorem~\ref{theo:cvmuseries} is obtained by proving that there exists 
	$\sigma > 0$ such that, at every iteration $j$,
	\begin{equation} \label{eq:expecteddecPhij}
		\E{\Phi_{j+1}-\Phi_j | \mathcal{F}^{M \cdot R}_{j-1} \cap E_j^0} 
		\; \le \; -\frac{\sigma}{\Mu_j^2\,\|r(X_j)\|^{\ipow}},
	\end{equation}
	where $\mathcal{F}^{M \cdot R}_{j-1} \cap E_j^0$ is the trace $\sigma$-algebra 
	\footnote{\revised{Given a $\sigma$-algebra $\mathcal{F}$ and an event $E$, the 
	trace $\sigma$-algebra is defined as $\{F \cap E | F \in \mathcal{F}\}$.}}
	produced by the event $E_j^0 =\{ \|r(X_k)\|^{\ipow} > 0\ \forall k=0,\dots,j\}$, 
	and the expectation is taken over the product trace $\sigma$-algebra generated by 
	all models and function value estimates. 
	We point out that the right-hand side is measurable with respect to 
	$\mathcal{F}^{M \cdot R}_{j-1}$.
	Since in our case, both $\|r(x_j)\|$ and $f(x_j)$ are bounded from below by $0$, we have that 
	$\Phi_j \ge 0$ and $\Mu_j>0$, 
	\eqref{eq:expecteddecPhij} guarantees that the series converges almost surely 
	(see, e.g.,~\cite[Proposition 4.24]{ECinlar_2011}).

	The proof focuses on a realization of the process $\Phi_k$, and divides
	the iterations into two subsets, depending on whether the following condition holds:
	\begin{equation} \label{eq:condproofmuseries}
		\|J(x_j)^\top r(x_j)\| \ge \frac{\zeta}{\mu_j}.
	\end{equation}
	This condition is strongly related to the requirements on $\mu_j$ in the lemmas 
	of Section~\ref{subsec:assumdeterm}.

%

\if\proofsCV1
\begin{proposition} \label{propo:submartingale}
Let $G_j$ be a submartingale, in other words, a set of random variables
which are integrable ($E(|G_j|) < \infty$) and satisfy
$E(G_j|\mathcal{F}_{j-1}) \geq G_{j-1}$, for every $j$,
where $\mathcal{F}_{j-1} := \sigma(G_0,\ldots,G_{j-1})$  is the $\sigma$-algebra generated by $G_0,\ldots,G_{j-1}$
and $E(G_j|\mathcal{F}_{j-1})$ denotes the conditional expectation of $G_j$ given the past history of
events $\mathcal{F}_{j-1}$.

Assume further that there exists $M>0$ such that $| G_j - G_{j-1} | \leq  M <\infty$, for every $j$.
Consider the random events $C := \{\lim_{j \to \infty} G_j$ ${\rm  exists\ and\ is\ finite}\}$
and $D := \{\limsup_{j\to \infty} {G_j} = \infty\}$. Then $P (C \cup D) = 1$.
\end{proposition}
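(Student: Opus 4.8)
The plan is to recast the claim as a zero-probability statement: since $(C \cup D)^c = C^c \cap D^c$ and $D^c = \{\limsup_{j\to\infty} G_j < \infty\}$, it suffices to prove that $P(C^c \cap D^c) = 0$, i.e., that on the event where the sequence does not tend to $+\infty$ it actually converges to a finite limit almost surely. First I would observe that, because the increments are bounded by $M$ and each $G_j$ is a.s. finite, the event $D^c$ coincides up to a null set with $\{\sup_j G_j < \infty\}$: if $\limsup_{j} G_j = L < \infty$ then $G_j \le L+1$ for all large $j$, and only finitely many earlier terms remain, each finite. Thus the task reduces to showing that $G_j$ converges to a finite limit on $\{\sup_j G_j < \infty\}$.

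The central device is a truncation by stopping times. For each integer $k \ge 1$, I would introduce $\tau_k \stackrel{\Delta}{=} \inf\{j \ge 0 : G_j \ge k\}$ and consider the stopped process $G_{j\wedge\tau_k}$, which is again a submartingale since optional stopping preserves the submartingale property. The key point, and the place where the bounded-increment hypothesis is indispensable, is that this stopped process is bounded above: on $\{\tau_k \ge 1\}$ one has $G_{\tau_k-1} < k$ and $|G_{\tau_k}-G_{\tau_k-1}| \le M$, so $G_{j\wedge\tau_k} \le k + M$ for every $j$, while the residual event $\{\tau_k = 0\}$ contributes only the integrable term $G_0$. Hence $\sup_j E\big((G_{j\wedge\tau_k})^+\big) \le E(G_0^+) + k + M < \infty$.

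With this uniform bound in hand, I would invoke Doob's submartingale convergence theorem to conclude that $G_{j\wedge\tau_k}$ converges almost surely to a finite limit. On the event $\{\tau_k = \infty\} = \{\sup_j G_j < k\}$ the stopped and original processes coincide, so $G_j$ itself converges to a finite limit there; consequently $\{\tau_k = \infty\} \subseteq C$ up to a null set. Taking the union over $k$ gives $\{\sup_j G_j < \infty\} = \bigcup_{k\ge 1}\{\tau_k = \infty\} \subseteq C$ almost surely, and combining this with the identification $D^c = \{\sup_j G_j < \infty\}$ yields $D^c \subseteq C$ a.s. Therefore $C^c \cap D^c$ is a null set and $P(C \cup D) = 1$.

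I expect the main obstacle to be the uniform-upper-bound step for the stopped process: this is exactly where the bounded-increment hypothesis enters, since a submartingale with $\limsup < \infty$ may otherwise fail to converge, and one must dispatch the edge case $\{\tau_k = 0\}$ through the integrability of $G_0$ rather than the $k+M$ bound. The remaining ingredients, namely optional stopping, Doob's convergence theorem, and the set-theoretic identifications, are standard once this integrability is secured.
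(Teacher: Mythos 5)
Your proof is correct. Note that the paper itself states this proposition without proof, quoting it as a standard auxiliary result (it is the submartingale analogue of the bounded-increment martingale convergence dichotomy found, e.g., in Durrett, and is used the same way in the STORM analysis the paper follows). Your argument --- stopping at the first passage above level $k$ so that the stopped process is a submartingale bounded above by $k+M$ plus the integrable contribution of $\{\tau_k=0\}$, applying Doob's convergence theorem, and taking the union over $k$ to cover $\{\sup_j G_j<\infty\}=D^c$ up to a null set --- is exactly the standard proof, and all the steps, including the identification of where the bounded-increment hypothesis is essential, are sound. The only cosmetic imprecision is writing $\{\tau_k=\infty\}=\{\sup_j G_j<k\}$ rather than $\{\sup_j G_j<k\}\subseteq\{\tau_k=\infty\}\subseteq\{\sup_j G_j\le k\}$, which does not affect the union over $k$.
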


This finally leads to the desired result.

\begin{theorem} \label{theo:cvliminf}
	Let the assumptions of Theorem~\ref{theo:cvmuseries} and Corollary~\ref{coro:condprobaspq} 
	hold. Then, the sequence of random iterates generated by Algorithm~\ref{alg:LM} satisfies:
	\[
		\Pr\left( \liminf_{j \rightarrow \infty} \|J(X_j)^\top r(X_j)\| = 0 
		\right) = 1.
	\]
\end{theorem}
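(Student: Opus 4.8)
The plan is to argue by contradiction, combining Theorem~\ref{theo:cvmuseries} with the success-forcing mechanism of Lemma~\ref{lemma:condmodelgraditsucc} and the submartingale dichotomy of Proposition~\ref{propo:submartingale}. Suppose the conclusion fails; then there is an $\epsilon>0$ for which the event $\mathcal{E} \stackrel{\Delta}{=} \{\liminf_{j\to\infty}\|\nabla f(X_j)\| \ge 2\epsilon\}$ has positive probability. On $\mathcal{E}$ there is a (random) index past which $\|\nabla f(X_j)\| \ge \epsilon$ for all subsequent $j$. Meanwhile Theorem~\ref{theo:cvmuseries} gives $\sum_{j}\Mu_j^{-2}<\infty$ almost surely, hence $\Mu_j\to\infty$ almost surely; in particular, on $\mathcal{E}$ the quantity $Z_j\stackrel{\Delta}{=}\log_\lambda \Mu_j$ tends to $+\infty$, and eventually $\Mu_j$ exceeds both $\mu_{\min}$ and the threshold $\kappa_{\mu g}/\|g_{m_j}\|$ implicit in~\eqref{eq:condmodelgraditsucc}. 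The contradiction will be that this forward drift of $\Mu_j$ is incompatible with the behaviour forced by frequent accurate steps.

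First I would record the geometry of $\Mu_j$: once $\Mu_j$ sits above the floor $\mu_{\min}$, a successful iteration gives $Z_{j+1}=Z_j-1$ (since $\Mu_{j+1}=\Mu_j/\lambda$), while an unsuccessful one gives $Z_{j+1}=Z_j+1$. The key deterministic input is that, for $j$ large enough on $\mathcal{E}$, simultaneous accuracy of the model and of the estimates forces a successful step: if $m_j$ is $(\kappa_{ef},\kappa_{eg})$-first-order accurate then $\|g_{m_j}\|\ge \|\nabla f(X_j)\|-\kappa_{eg}/\Mu_j\ge \epsilon/2$, so $\Mu_j\ge \kappa_{\mu g}/\|g_{m_j}\|$ holds for $\Mu_j$ large, and with the estimates also accurate Lemma~\ref{lemma:condmodelgraditsucc} yields a successful iteration, whence $Z_{j+1}=Z_j-1$. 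Letting $A_j$ denote the event that both the model and the estimates are accurate at iteration $j$, this gives the pointwise bound $Z_{j+1}-Z_j \le 1-2\,\mathbf{1}_{A_j}$ on $\mathcal{E}$ for all large $j$.

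Next I would convert this into a drift estimate. Conditioning first on $\mathcal{F}^{M\cdot F}_{j-1}$ for the model and then on $\mathcal{F}^{M\cdot F}_{j-1/2}$ for the estimates, Assumptions~\ref{assum:models} and~\ref{assum:festimates} produce $\Pr(A_j\mid \mathcal{F}^{M\cdot F}_{j-1})\ge pq$. Condition~\eqref{eq:condprobaspq1} of Corollary~\ref{coro:condprobaspq} forces $pq>1/2$, so $\E{Z_{j+1}-Z_j \mid \mathcal{F}^{M\cdot F}_{j-1}} \le 1-2pq<0$, i.e. $G_j\stackrel{\Delta}{=}-Z_j$ is a submartingale with increments bounded by $1$. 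Proposition~\ref{propo:submartingale} then guarantees that, almost surely, either $G_j$ converges to a finite limit or $\limsup_j G_j=+\infty$. But on $\mathcal{E}$ we have $G_j=-Z_j\to-\infty$, which excludes both alternatives; therefore $\Pr(\mathcal{E})=0$. Since $\epsilon>0$ was arbitrary, $\liminf_{j\to\infty}\|\nabla f(X_j)\|=0$ almost surely, as claimed.

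The main obstacle is making this submartingale construction rigorous. The defining property of $\mathcal{E}$, namely that $\|\nabla f(X_j)\|$ stays above $\epsilon$ for all sufficiently large $j$, is a forward-looking condition, so neither $\mathcal{E}$ nor the random index beyond which the drift estimate applies is adapted to $\{\mathcal{F}^{M\cdot F}_{j}\}$ in a naive way. The careful route, following~\cite{RChen_MMenickelly_KScheinberg_2018}, is to freeze the walk before that index (or to work with a suitably stopped version) and to check that on all other iterations the increment of $G_j$ is still at most $+1$, so that the submartingale inequality $\E{G_{j+1}\mid \mathcal{F}^{M\cdot F}_{j-1}}\ge G_j$ holds globally rather than only on $\mathcal{E}$. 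I would also verify the two-stage conditioning that yields $\Pr(A_j\mid \mathcal{F}^{M\cdot F}_{j-1})\ge pq$, and confirm that the floor $\mu_{\min}$ is inactive for large $j$ on $\mathcal{E}$ so that a successful iteration genuinely decreases $Z_j$ by exactly one unit.
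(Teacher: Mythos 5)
Your proposal is correct and follows essentially the same route as the paper: contradiction, $\Mu_j \to \infty$ from Theorem~\ref{theo:cvmuseries}, simultaneous accuracy of the model and the estimates forcing a successful iteration (hence a unit decrease of $\log_\lambda \Mu_j$) with conditional probability at least $pq > 1/2$, and the bounded-increment submartingale dichotomy of Proposition~\ref{propo:submartingale}. The adaptedness issue you flag at the end is precisely the one the paper resolves by introducing the auxiliary walk $W_j = \sum_{i=0}^{j}\left(2\,\mathbf{1}_{U_i}\mathbf{1}_{V_i}-1\right)$, which is globally a submartingale with $\pm 1$ increments, applying Proposition~\ref{propo:submartingale} to it, and then comparing it pathwise to $r_j = \log_\lambda(b/\mu_j)$ beyond the random index $j_0$ on the bad realization.
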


\begin{proof}
	Following the lines of the proof of~\cite[Theorem 4.16]{RChen_MMenickelly_KScheinberg_2018}, we 
	proceed by contradiction and assume that, there exists $\epsilon' >0$ 
	such that 
	\[
		\Pr\left(\|\nabla f(X_j)\| \ge \epsilon' \forall j \; \middle| \; 
		\left\{ \Mu_j \rightarrow \infty \right\}\right) > 0.
	\]
	We then consider a realization of Algorithm~\ref{alg:LM} for which 
	$\|\nabla f(x_j)\| \ge \epsilon'\ \forall j$. Since 
	$\lim_{j \rightarrow \infty} \mu_j = \infty$, there exists $j_0$ such 
	that for every $j \ge j_0$, we have:
	\begin{equation} \label{eq:musuffbig}
		\mu_j  > b  :=\max\left\{ \frac{2\kappa_{\mu g}}{\epsilon'}, 
		\frac{16(\kappa_{ef}+\kappa_{efs})}{\eta_1 \theta_{fcd}\epsilon'}, 
		\frac{2\kappa_{J_m}^2}{\epsilon'}, \frac{2\eta_2}{\epsilon'}, 
		\lambda\,\mu_{\min}\right\}.
	\end{equation}
	
	Let \revised{$Z_j$} be a random variable with realizations 
	$\revised{z_j}=\log_{\lambda}\left(\frac{b}{\mu_j}\right)$: 
	then for the realization we are considering, we have $\revised{z_j} < 0$ for $j \ge j_0$. Our objective 
	is to show that such a realization has a a zero probability of occurrence.
	
	Consider $j \ge j_0$ such that both events $S_j$ and $\Vj_j$ happen: the probability of such 
	an event is at least $pq$. Because the model is accurate and we have~\eqref{eq:musuffbig}: 
	\[
		\|g_{m_j}\| \ge \|\nabla f(x_j)\| -\frac{\kappa_{eg}}{\mu_j^2} \ge 
		\epsilon' - \frac{\epsilon'}{2} = \frac{\epsilon'}{2}.
	\]	
	We are thus in the assumptions of Lemmas~\ref{lemma:decreasegoodmodeltruegrad} 
	and~\ref{lemma:condmodelgraditsucc}, from which we conclude 
	that the $j$-th iteration is successful, so the parameter $\mu_j$ is decreased, i.e., 
	$\mu_{j+1} = \tfrac{\mu_j}{\lambda}$. Consequently, \revised{$z_{j+1}=z_j+1$}. 
	
	For any other outcome for $\Uj_j$ and $\Vj_j$ other than ``both happen" (which occur with 
	probability at most $1-pq$), we have $\mu_{j+1} \le \lambda \mu_j$. As a result,
	letting $\mathcal{F}^{V\cdot T}_{j-1} = \sigma(V_0,\dots,V_{k-1}) \cap 
	\sigma(T_0,\dots,T_{k-1}) = \sigma(\revised{Z_0,\dots,Z_{j-1}})$,
	\[
		\E{\revised{Z_{j+1}} | \mathcal{F}^{V\cdot T}_{j-1}} \ge 
		pq (\revised{Z_j+1}) + (1-pq)(\revised{Z_j}-1) \ge \revised{Z_j},
	\]
	because $pq > 1/2$ as a consequence of the assumptions from 
	Corollary~\ref{coro:condprobaspq}. This implies that \revised{$Z_j$} is a submartingale.
	
	We now define another submartingale $W_j$ by
	\[
		W_j = \sum_{i=0}^j (2 \mathbf{1}_{\Uj_i} \mathbf{1}_{\Vj_i} - 1),
	\]
	where $\mathbf{1}_A$ is the indicator random variable of the event $A$.
	Note that $W_j$ is defined on the same probability space as \revised{$Z_j$}, and 
	that we have:
	\begin{eqnarray*}
		\E{W_j|\mathcal{F}^{V\cdot T}_{j-1}} &= 
		&\E{W_{j-1}|\mathcal{F}^{V\cdot T}_{j-1}} + 
		\E{2 \mathbf{1}_{\Uj_j} \mathbf{1}_{\Vj_j} - 1|\mathcal{F}^{V\cdot T}_{j-1}} \\
		&= &W_{j-1} + 2\Pr\left(\Uj_j \cap \Vj_j | \mathcal{F}^{V\cdot T}_{j-1}\right) - 1 \\
		&\ge &W_{j-1},
	\end{eqnarray*}
	where the last inequality holds because $pq  \ge 1/2$. Therefore, 
	$W_j$ is a submartingale with bounded ($\pm 1$) increments. By 
	Proposition~\ref{propo:submartingale}, it does not have a finite limit and 
	the event $\left\{ \limsup_{j \rightarrow \infty} W_j = \infty\right\}$ has 
	probability 1.
	
	To conclude, observe that by construction of \revised{$Z_j$} and $W_j$, one has 
	$\revised{z_j-z_{j_0}} \ge w_j  - w_{j_0}$, where $w_j$ is a realization of $W_j$. 
	This means that $R_j$ must be positive infinitely often with probability 
	one, thus that there is a zero probability of having $r_j < 0 ~~~\forall 
	j \ge j_0$. This contradicts our initial assumption that 
	$\Pr(\|\nabla f(X_j)\| \ge \epsilon' ~~~\forall j ) > 0$, which means 
	that we must have 
	\[
		\Pr\left( \liminf_{j \rightarrow \infty} \|\nabla f(X_j)\| = 0 
		\right) = 1.
	\]
 \end{proof}

When $\{J(X_j)\}$ is non-degenerate, we note that Theorem~\ref{theo:cvliminf} 
implies almost-sure convergence of the residual norm sequence $\{\|r(X_j)\|\}$.
\fi
%
Note that the result of Theorem~\ref{theo:cvmuseries} holds for any 
$\icc \in \mathbb{N} \cup {-1}$. In particular, when $\icc=-1$, the 
conditioning event $\|r(X_j)\|^{\ipow}=1 >0$ is true for all realizations of 
the method and we simply have:
\[
	\Pr\left( \sum_{j=0}^{\infty}\,\frac{1}{\Mu_j^2} < \infty \right) \; = \; 1.
\]
\revised{As a result, one can follow the lines of the proof 
of~\cite[Theorem 4.16]{RChen_MMenickelly_KScheinberg_2018} and establish that
Algorithm~\ref{alg:LM} to a stationary point, i.e. the sequence of iterates 
satisfies $\liminf_j \|J(X_j)^\top r(X_j)\| = 0$,  with 
probability 1. In this work, we focus is on complexity results, thus we refer 
to \revised{the first, unpublished} version of 
this work for a full convergence 
proof~\cite{EBergou_YDiouane_VKungurtsev_CWRoyer_2018}.}

\subsection{Complexity bound} \label{subsec:ww:mainres}

We now introduce the necessary probability tools to derive our complexity 
results. Given a stochastic process $\{X_j \}$, $T$ is said to a be a 
\emph{stopping time} for $\{X_j \}$, if, for all $j \ge 1$, the event 
$\{ T \le j\}$ belongs to the $\sigma$-algebra associated with $X_1, X_2, ...X_j$. For a 
given $\epsilon>0$, we define a random time $T_\epsilon$ by
$$
T_\epsilon := \inf \left\{j \ge 0, \|r(X_j)\| \le \epsilon_p\ \mbox{or}\ 
\|g_r^{\icc}(X_j)\| \le \epsilon_d \right\}.
$$
We also define $\Mu_\epsilon = \zeta/(\epsilon_p^{\ipow}\epsilon_d)$, 
where $\zeta$ satisfies~\eqref{eq:zetamuseries}.
Based on the above analysis, $T_{\epsilon}$ is a stopping time for the 
stochastic process defined by Algorithm \ref{alg:LM} and hence for 
$\{\Phi_j, \Mu_j\}$ where $\Phi_j$ is given by (\ref{eq:Phi_j}).

\begin{assumption} \label{assum:bnds:mu&Phi}
	There exists a positive constant $\kappa_{r}>0$ such for all $j$, 
	w.p. 1, $\|r(X_j)\| \le \kappa_r$.
\end{assumption}


By Assumption~\ref{assum:bnds:mu&Phi}, we have that for any $j$ such that 
$\|r(X_j)\| \ge \epsilon_p$:
\begin{equation*}
	\forall j,\quad \Phi_j = \tau \|r(X_j)\|^{1/2^\icc} + 
	(1-\tau) \frac{1}{\Mu_j^2 \|r(X_j)\|^{\ipow}} \;
	\le \kappa_r^{1/2^\icc} + \frac{1}{\mu_{\min}^2\epsilon_p^{\ipow}} \;
	\le \Phi_{\max}\epsilon_p^{\frac{1-2^{\icc+1}}{2^\icc}},
\end{equation*}
where we define $\Phi_{max}=\kappa_r^{1/2^\icc}+\tfrac{1}{\mu_{\min}^2}$. The 
last inequality comes from $\ipow \ge 0$ for $\icc \ge -1$ together with
$\epsilon_p \in (0,1)$.
}

For simplicity reasons, we will assume that 
$\mu_0= \frac{\Mu_\epsilon}{\lambda^s}$ and 
$\mu_{\min}= \frac{\Mu_\epsilon}{\lambda^t}$ for some integers $s,t>0$, hence 
for all $j$, one has $\Mu_j= \frac{\Mu_\epsilon}{\lambda^k}$ for some integer 
$k$. We note that, in this case, whenever  $\Mu_{j} < \Mu_\epsilon$, one has 
$\Mu_{j} \le \frac{\Mu_\epsilon}{\lambda}$, and hence 
$\Mu_{j+1} \le\Mu_\epsilon$. This assumption can be made without loss of 
generality, for instance, provided $\mu_{\min} = \mu_0\lambda^{s-t}$ (one can 
choose $\mu_{\min}$ so that this is true) and $\zeta = \mu_0\lambda^s\epsilon$, 
where $s$ is the smallest integer such that $\zeta$ 
satisfies~\eqref{eq:zetamuseries}.

The next lemma defines a geometric random walk based on successful iterations. 
The final complexity result heavily depends upon the behavior of this random 
walk. Note that this reasoning departs from the existing analysis of 
stochastic trust-region 
schemes~\cite{JBlanchet_CCartis_MMenickelly_KScheinberg_2019}.

\begin{lemma} \label{lm:stoch:Mu_update} 	
Let Assumptions~\ref{assum:fC11}, \ref{assum:Jacmodel}, 
\ref{assum:modeldecrease}, \ref{assum:stepbounds} and \ref{assum:eta2} hold. 
For all $j < T_{\epsilon}$, whenever $\Mu_{j} \ge \Mu_\epsilon$, one has
$\Mu_{j+1}= \frac{\Mu_j}{\lambda} \mathbf{1}_{\Omega_j} 
+ \lambda \Mu_{j} (1 -\mathbf{1}_{\Omega_j}),
$ or, equivalently, letting $\gamma = \log(\lambda)$, one has
\begin{eqnarray} \label{stoch:Mu_update}
\Mu_{j+1} & = &  \Mu_j e^{\gamma  \Lambda_j},
\end{eqnarray}
where $\mathbf{1}_{\Omega_j}$ is equal to $1$ if the iteration $j$ is 
successful and $0$ otherwise, and $\Lambda_j = 2 \mathbf{1}_{\Omega_j}-1$ 
defines a birth-and-death process\revised{. That is, $\{\Lambda_j\}$ satisfies}
$$
\Pr(\Lambda_j= 1 |\mathcal{F}_{j-1}^{M \cdot R}, \Mu_{j} \ge \Mu_\epsilon) 
= 1 - \Pr(\Lambda_j=-1 |\mathcal{F}_{j-1}^{M \cdot R}, \Mu_{j} \ge \Mu_\epsilon) 
=  \omega_j, \quad \mbox{with}\quad \omega_j \ge pq.
$$
\end{lemma}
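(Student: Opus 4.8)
The plan is to treat the deterministic update formula and the probabilistic birth-and-death structure separately. First I would read off the parameter update from Algorithm~\ref{alg:LM}. An unsuccessful iteration sets $\mu_{j+1} = \lambda\mu_j$ unconditionally, matching the factor attached to $(1 - \mathbf{1}_{\Omega_j})$. A successful iteration sets $\mu_{j+1} = \max\{\mu_j/\lambda, \mu_{\min}\}$, so the only thing to check is that the maximum reduces to $\mu_j/\lambda$. This is precisely where the discretization assumption enters: writing $\mu_{\min} = \Mu_\epsilon/\lambda^t$ with $t \ge 1$ and $\Mu_j = \Mu_\epsilon/\lambda^k$, the hypothesis $\Mu_j \ge \Mu_\epsilon$ forces $k \le 0$, whence $\mu_j/\lambda = \Mu_\epsilon/\lambda^{k+1} \ge \Mu_\epsilon/\lambda \ge \mu_{\min}$. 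Thus on successful iterations $\Mu_{j+1} = \Mu_j/\lambda$, which gives the announced two-case formula and, after substituting the two cases, the compact exponential form~\eqref{stoch:Mu_update}.

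The core of the argument is the deterministic implication that, for $j < T_\epsilon$ and $\Mu_j \ge \Mu_\epsilon$, the simultaneous occurrence of an accurate model $\Uj_j$ and accurate estimates $\Vj_j$ forces a successful iteration. By definition of the stopping time, $j < T_\epsilon$ gives $\|\nabla f(x_j)\| > \epsilon$, while $\Mu_j \ge \Mu_\epsilon = \zeta/\epsilon$ gives $\epsilon \ge \zeta/\Mu_j$; combined, $\|\nabla f(x_j)\| > \zeta/\Mu_j$. Invoking the gradient accuracy~\eqref{eq:firstordaccgrad} and the choice $\zeta \ge \kappa_{eg} + \kappa_{\mu g}$ guaranteed by~\eqref{eq:wcc:zeta}, I would bound
\[
\|g_{m_j}\| \ge \|\nabla f(x_j)\| - \frac{\kappa_{eg}}{\Mu_j} > \frac{\zeta - \kappa_{eg}}{\Mu_j} \ge \frac{\kappa_{\mu g}}{\Mu_j},
\]
which is exactly condition~\eqref{eq:condmodelgraditsucc}. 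Lemma~\ref{lemma:condmodelgraditsucc} then yields that iteration $j$ is successful, i.e.\ $\mathbf{1}_{\Omega_j} = 1$ and $\Lambda_j = 1$.

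Finally I would turn this implication into the bound $\omega_j \ge pq$. Since $\Mu_j$ is $\mathcal{F}^{M \cdot F}_{j-1}$-measurable, the event $\{\Mu_j \ge \Mu_\epsilon\}$ may be placed in the conditioning, and the previous step gives $\Pr(\Lambda_j = 1 \mid \mathcal{F}^{M \cdot F}_{j-1}, \Mu_j \ge \Mu_\epsilon) \ge \Pr(\Uj_j \cap \Vj_j \mid \mathcal{F}^{M \cdot F}_{j-1})$. To lower-bound the latter by $pq$ I would chain the two accuracy assumptions using the tower property, exploiting that $\Uj_j$ is $\mathcal{F}^{M \cdot F}_{j-1/2}$-measurable:
\[
\Pr(\Uj_j \cap \Vj_j \mid \mathcal{F}^{M \cdot F}_{j-1}) = \E{\mathbf{1}_{\Uj_j}\,\Pr(\Vj_j \mid \mathcal{F}^{M \cdot F}_{j-1/2}) \;\middle|\; \mathcal{F}^{M \cdot F}_{j-1}} \ge q\,\Pr(\Uj_j \mid \mathcal{F}^{M \cdot F}_{j-1}) \ge pq,
\]
by Assumptions~\ref{assum:models} and~\ref{assum:festimates}.

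The main obstacle is this last step: one must order the conditioning $\sigma$-algebras correctly, first conditioning on $\mathcal{F}^{M \cdot F}_{j-1/2}$ to extract $q$ and then on $\mathcal{F}^{M \cdot F}_{j-1}$ to extract $p$, so that the product $pq$ emerges rather than a weaker bound, and one must verify that conditioning on the past-measurable event $\{\Mu_j \ge \Mu_\epsilon\}$ does not degrade the accuracy probabilities. The remaining pieces, namely the case analysis of the update rule and the gradient-to-model-gradient transfer, are routine once the choice of $\zeta$ and the discretization hypothesis are in place.
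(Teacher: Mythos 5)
Your proof is correct and follows essentially the same route as the paper: read off the two-case update from the algorithm, use $j<T_\epsilon$ and $\Mu_j\ge\Mu_\epsilon=\zeta/\epsilon$ together with the gradient-accuracy bound and $\zeta\ge\kappa_{eg}+\kappa_{\mu g}$ to trigger Lemma~\ref{lemma:condmodelgraditsucc} on the event $\Uj_j\cap\Vj_j$, and conclude $\omega_j\ge pq$. If anything, you spell out two details the paper leaves implicit (that the discretization hypothesis makes $\max\{\mu_j/\lambda,\mu_{\min}\}=\mu_j/\lambda$ when $\Mu_j\ge\Mu_\epsilon$, and the tower-property chaining of the two conditional accuracy bounds to get the product $pq$), both of which are handled correctly.
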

\begin{proof}
By the mechanism of the algorithm one has 
$\Mu_{j+1} =  \frac{\Mu_j}{\lambda} \mathbf{1}_{\Omega_j} 
+ \lambda \Mu_{j} (1 -\mathbf{1}_{\Omega_j})$.

Moreover, if $\mu_{j} \ge \Mu_\epsilon$ for a given $j< T_{\epsilon}$, one has 
\[
	\| \nabla f(x_j)\| = \|g_r(x_j)\|\|r(x_j)\|^{\ipow} \ge \epsilon_d \epsilon_p^{\ipow} 
	=\frac{\zeta}{\Mu_\epsilon} \ge \frac{\zeta}{\mu_j}.
\]
Assume $\mathbf{1}_{\Uj_j}=1$ and $\mathbf{1}_{\Vj_j}=1$ (i.e. both $m_j$ and 
$(r^0_j,r^s_j)$ are accurate). Because the model is 
$(\kappa_{ef},\kappa_{eg})$-first-order accurate, this implies
\[
	\|g_{m_j}\| \ge \|\nabla f(x_j)\|-\frac{\kappa_{eg}}{\mu_j} 
	\ge (\zeta-\kappa_{eg})\frac{1}{\mu_j} \ge \frac{\kappa_{\mu g}}{\mu_j},
\] 
and since the estimates are also accurate, the iteration is must be successful per 
Lemma~\ref{lemma:condmodelgraditsucc}. Hence, one gets 
$\omega_j= \Pr(\Lambda_j=1 |\mathcal{F}_{j-1}^{M \cdot R}, \Mu_{j} \ge \Mu_\epsilon) 
\ge pq$.
\end{proof}
Lemma~\ref{lm:stoch:Mu_update} is analogous to~\cite[Lemma 7]{JBlanchet_CCartis_MMenickelly_KScheinberg_2019}, 
however, in our case, the birth-and-process $\{\Lambda_j\}$ is based on successful iterations, whereas~\cite{JBlanchet_CCartis_MMenickelly_KScheinberg_2019} considered the iterations where both the function estimates 
and the model were accurate.

For convenience, conditioned on the fact that
$T_{\epsilon}>j$, the following proposition recalls the main argument in
proving Theorem~\ref{theo:cvmuseries} in the case where $\|J(X_j)^\top r(X_j)\| \ge \tfrac{\zeta}{\Mu_j}$ (see ``\textit{Case 1}'' in the proof of Theorem \ref{theo:cvmuseries} in Appendix~\ref{sec:prooftheomuseries}).

\begin{proposition} 
\label{lm:decrease:phi} Let Assumptions~\ref{assum:fC11}, \ref{assum:Jacmodel}, 
\ref{assum:modeldecrease}, \ref{assum:stepbounds} and \ref{assum:eta2} hold.
Suppose that Assumptions~\ref{assum:models} and \ref{assum:festimates} are 
also satisfied\revised{. Moreover, assume that} the probabilities $p$ and $q$ satisfy:
	\begin{equation}  \label{eq:condprobaspq11}
		\frac{pq-1/2}{(1-p)(1-q)} \ge \frac{C_3}{C_1},
	\end{equation}	
and
	\begin{equation} \label{eq:condprobaspq22}
		(1-p)(1-q) \le \frac{(1-\tau)\left(1-\tfrac{1}{\lambda^2}\right)}
		{2\left(\tau C_3\zeta + (1-\tau)(\lambda^{2}-1)\right)}.
	\end{equation} 
Then, there exists a constant $\sigma>0$ such that, conditioned on 
$T_{\epsilon}>j$, one has
	\begin{eqnarray}
		\E{\Phi_{j+1}-\Phi_j | \mathcal{F}_{j-1}^{M \cdot R}} 
		= \E{\Phi_{j+1} | \mathcal{F}_{j-1}^{M \cdot R}} - \Phi_j
		< -\frac{\sigma}{\Mu_j^2 \|r(X_j)\|^{\ipow}},
	\end{eqnarray}
%
where $\sigma := \tfrac{1}{4}(1-\tau)\left(1-\tfrac{1}{\lambda^2}\right)$.
\end{proposition}

We now define a renewal process $\{A_i\}$ by $A_0=0$ and 
$A_i=\min \left\{k> A_{i-1}: \Mu_k \le \Mu_\epsilon\right\}$, that
\revised{tracks the iterations indices for which $\Mu_j$ returns to 
a prescribed level $\Mu_\epsilon$.}
For all $j\ge 1$, we let
$\tau_j = A_j - A_{j-1}.$ The next result provides a bound on the expected
value of $\tau_j$.

\begin{lemma} \label{lm:E_tau}
Let Assumptions~\ref{assum:fC11}, \ref{assum:Jacmodel}, 
\ref{assum:modeldecrease}, \ref{assum:stepbounds} and \ref{assum:eta2} hold. 
Assuming that $pq> \frac{1}{2}$, one has for all $j$
\begin{eqnarray} \label{lem:tau}
\E{\tau_j} &\le & \frac{pq}{2pq-1}.
 \end{eqnarray}
\end{lemma}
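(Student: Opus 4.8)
The plan is to reinterpret $\tau_j$ as a return time to the region $\{\Mu \le \Mu_\epsilon\}$ of the multiplicative process governed by Lemma~\ref{lm:stoch:Mu_update}. Setting $Z_k = \log_\lambda(\Mu_k/\Mu_\epsilon)$, which is integer-valued by the standing normalization $\Mu_k = \Mu_\epsilon/\lambda^{\text{integer}}$, the renewal condition $\Mu_k \le \Mu_\epsilon$ becomes $Z_k \le 0$, so that $\tau_j$ is exactly the number of steps needed for $Z$ to return to $\{Z_k \le 0\}$ after the renewal time $A_{j-1}$. By Lemma~\ref{lm:stoch:Mu_update}, whenever $Z_k \ge 0$ the increment $Z_{k+1}-Z_k$ equals $-1$ with conditional probability $\omega_k \ge pq$ and $+1$ otherwise; hence, on $\{Z_k \ge 0\}$, the process $Z$ behaves like a birth-and-death chain with a strict downward drift, since $pq > \tfrac12$.

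First I would dispose of the trivial case. At a renewal time $Z_{A_{j-1}} \le 0$. If $Z_{A_{j-1}} \le -1$, the observation preceding the statement (that $\Mu_k < \Mu_\epsilon$ forces $\Mu_{k+1} \le \Mu_\epsilon$) gives $Z_{A_{j-1}+1} \le 0$, so $\tau_j = 1$ deterministically, which is at most $\tfrac{pq}{2pq-1}$ because $pq \le 1$ implies $\tfrac{pq}{2pq-1} \ge 1$. The only nontrivial starting state is therefore $Z_{A_{j-1}} = 0$, i.e.\ $\Mu_{A_{j-1}} = \Mu_\epsilon$; throughout any subsequent excursion into $\{Z_k \ge 1\}$ we remain in the regime $\Mu_k \ge \Mu_\epsilon$ where the birth-and-death description of Lemma~\ref{lm:stoch:Mu_update} applies.

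Next I would compute the return time starting from $Z = 0$. With probability $\omega \ge pq$ the first step moves to $Z = -1$ and $\tau_j = 1$; with probability $1-\omega \le 1-pq$ it moves to $Z = +1$, after which $\tau_j = 1 + R$, where $R$ is the first-passage time of the chain from level $1$ down to level $0$. Writing $h_1 = \E{R}$, this yields $\E{\tau_j \mid Z_{A_{j-1}}=0} = 1 + (1-\omega)h_1 \le 1 + (1-pq)h_1$. For the homogeneous biased walk with down-probability $p_0$ and up-probability $1-p_0$, the expected hitting times $h_k$ of $0$ solve $h_k = 1 + p_0 h_{k-1} + (1-p_0)h_{k+1}$ with $h_0=0$, whose solution is $h_k = k/(2p_0-1)$; in particular $h_1 = 1/(2p_0-1)$.

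The main obstacle is that our chain is \emph{not} homogeneous: the down-probabilities $\omega_k$ are only bounded below by $pq$, not equal to it, so the closed-form hitting time does not apply verbatim. I would resolve this by a stochastic-domination argument: a chain whose downward transition probabilities are everywhere at least $pq$ can be coupled with the homogeneous chain of down-probability exactly $pq$ so as to reach $\{Z_k \le 0\}$ no later, whence its first-passage time from $1$ is stochastically dominated and $h_1 \le 1/(2pq-1)$. Substituting gives $\E{\tau_j \mid Z_{A_{j-1}}=0} \le 1 + (1-pq)/(2pq-1) = pq/(2pq-1)$. Combining this with the trivial case establishes $\E{\tau_j} \le \tfrac{pq}{2pq-1}$ for every $j$, as claimed.
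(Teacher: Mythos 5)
Your proof is correct, and while it shares the paper's overall skeleton --- the same decomposition of $\E{\tau_j}$ according to whether $\Mu_{A_{j-1}} < \Mu_\epsilon$ (where $\tau_j=1$ deterministically, by the normalization that forces $\Mu_{k+1}\le\Mu_\epsilon$ whenever $\Mu_k<\Mu_\epsilon$) or $\Mu_{A_{j-1}} = \Mu_\epsilon$, and the same reliance on the birth-and-death structure supplied by Lemma~\ref{lm:stoch:Mu_update} --- it handles the key estimate by a genuinely different route. The paper treats the excursion $\{\Mu_{A_{j-1}},\dots,\Mu_{A_j}\}$ as a geometric random walk, uses the local balance equations to get $\pi_k \le \varkappa^k \pi_0$ with $\varkappa = \frac{1-pq}{pq}$, deduces $\pi_0 \ge 1-\varkappa$, and then invokes the ergodic-chain identity that the mean return time to the state $\Mu_\epsilon$ equals $1/\pi_0 \le \frac{pq}{2pq-1}$. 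You instead do a first-step decomposition, $\E{\tau_j \mid \Mu_{A_{j-1}}=\Mu_\epsilon} = 1 + (1-\omega)\E{R}$ with $R$ the first-passage time from level $1$ to level $0$, solve the hitting-time recursion for the homogeneous downward-biased walk to get $h_1 = \frac{1}{2p_0-1}$, and transfer the bound to the inhomogeneous chain by a monotone coupling; the arithmetic $1+\frac{1-pq}{2pq-1}=\frac{pq}{2pq-1}$ then closes the argument. Both computations land on the same constant, but your version is more elementary and, precisely where the inhomogeneity of the $\omega_k$ matters, arguably more careful: the paper's appeal to a limiting stationary distribution and to $1/\pi_0$ as a mean recurrence time is really a statement about homogeneous ergodic chains, and the inequality manipulation it performs on the balance equations is exactly the domination your coupling makes explicit. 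Two small points worth recording if you write this up in full: the unit-step structure (plus equal parity) is what preserves the ordering of the two coupled walks, and $h_k = \frac{k}{2p_0-1}$ should be identified as the minimal nonnegative solution of the recursion, hence the actual expected hitting time. Neither proof addresses the restriction of Lemma~\ref{lm:stoch:Mu_update} to $j<T_\epsilon$, but that omission is shared with the paper.
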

 \begin{proof}
 One has 
\begin{eqnarray} \label{eq:1}
 \E{\tau_j} &=& \E{\tau_j | \Mu_{A_{j-1}} < \Mu_\epsilon} \Pr(\Mu_{A_{j-1}} < \Mu_\epsilon) +  \E{\tau_j | \Mu_{A_{j-1}} = \Mu_\epsilon} \Pr(\Mu_{A_{j-1}} = \Mu_\epsilon) \nonumber \\
 & \le & \max\{ \E{\tau_j | \Mu_{A_{j-1}} < \Mu_\epsilon} , \E{\tau_j | \Mu_{A_{j-1}} = \Mu_\epsilon} \}.
 \end{eqnarray}
 First we note that whenever  $\Mu_{j} < \Mu_\epsilon$, one has $\Mu_{j} \le \frac{\Mu_\epsilon}{\lambda}$, and hence $\Mu_{j+1} \le\Mu_\epsilon$. Thus, if $\Mu_{A_{j-1}} < \Mu_\epsilon $, one deduces that $A_j=A_{j-1} +1$ and then 
 \begin{eqnarray} \label{eq:2}
 \E{\tau_j | \Mu_{A_{j-1}} < \Mu_\epsilon}&=&1.
\end{eqnarray}
Assuming now that $A_{j}>A_{j-1}+1$ (if not, meaning that $A_{j}=A_{j-1}+1$, the proof is straightforward ), then conditioned on $\Mu_{A_{j-1}} = \Mu_\epsilon$, one has $ \Mu_{A_{j}} = \Mu_\epsilon$ as well. We note also that for all $k_j \in [A_{j-1},A_{j}]$, one has  $\Mu_{k_j} \ge \Mu_\epsilon$. Hence, using Lemma \ref{lm:stoch:Mu_update}, one has
\begin{eqnarray*} 
\Mu_{k_j+1} & = &  \Mu_{k_j} e^{\gamma  \Lambda_{k_j}},
\end{eqnarray*}
where $\gamma =\log(\lambda)$ and $\Pr(\Lambda_{k_j}=1 |\mathcal{F}_{{k_j}-1}^{M \cdot R}, \Mu_{{k_j}} \ge \Mu_\epsilon) = \omega_{k_j}$ and $\Pr(\Lambda_{k_j}=-1 |\mathcal{F}_{{k_j}-1}^{M \cdot R}, \Mu_{{k_j}} \ge \Mu_\epsilon) = 1- \omega_{k_j}$. Moreover, one has $\omega_{k_j}\ge pq$.

The process 
$\{\Mu_{A_{j-1}},\Mu_{A_{j-1}+1}, \ldots, \Mu_{A_{j}} \}$ then defines a geometric random walk between two returns to the same state (i.e., $ \Mu_\epsilon$) and $\tau_j$ represents the number of iterations until a return to the initial state.
%
%
For such a geometric random walk, one can define the state probability vector $\pi=(\pi_k)_k$ corresponding to the limiting stationary 
distribution~\cite{RDYates_DJGoodman_2005}.  Using the local balance equation between the two states $k$ and $k+1$, see \cite[Theorem 12.13]{RDYates_DJGoodman_2005}, one has
$$
(1-\omega_k)\pi_k = \omega_k \pi_{k+1}.
$$
Since $\omega_k \ge pq$, one deduces that $(1-pq)\pi_k \ge pq \pi_{k+1}$.
Hence, 
$$
\pi_k \le  \varpi^k \pi_0  \mbox{~~~~where~~} \varpi=\frac{1-pq}{pq}.
$$
Using the assumption $ \varpi<1$ (i.e. $ pq>\frac{1}{2}$) and the definition of the state probability $\sum_k^\infty \pi_k =1$, one has $\pi_0 \ge 1- \varpi$ (this is a classical result for geometric random walk, see for instance  \cite[Example 12.26]{RDYates_DJGoodman_2005}).

Applying the properties of ergodic Markov chains, one deduces that the expected number of iterations until a return to the initial state (the state $0$) is given by $\frac{1}{\pi_0}$.
Hence 
\begin{equation} \label{eq:3}
\E{\tau_j | \Mu_{A_{j-1}} = \Mu_\epsilon} =  \frac{1}{\pi_0} \le   \frac{1}{1-\varpi}=\frac{pq}{2pq-1}.
\end{equation}
By substituting (\ref{eq:2}) and  (\ref{eq:3}) into  (\ref{eq:1}), one deduces $\E{\tau_j} \le  \frac{pq}{2pq-1}$ and hence the proof is completed. 
\end{proof}

We now introduce a counting process $N(j)$ given by the number of renewals that occur before time $j$:
$$
N(j):= \max \left\{ i : A_i \le j \right\}.
$$ 
We also consider the sequence of random variables defined by $Y_0=\Phi_0$ and 
 $$Y_j =\Phi_{\min(j,T_{\epsilon})} + \sigma \sum_{k=0}^{\min(j,T_{\epsilon}) -1} \left(
 \frac{1}{\Mu_k^2 \|r(X_k)\|^{\ipow}} \right)
 \quad \mbox{for all $j \ge 1$.}$$

The definition of $\{Y_j\}$ is our second and main distinction from the analysis of stochastic 
trust region (see~\cite[Lemma 2.2]{JBlanchet_CCartis_MMenickelly_KScheinberg_2019}). 

\begin{lemma} 
 \label{lm:E_N_T_eps}
 Under the assumptions of Proposition \ref{lm:decrease:phi}. Let Assumption  \ref{assum:bnds:mu&Phi} hold. One has,
$$
\E{N(T_{\epsilon})} \le \frac{\Phi_0 \kappa_r^{\ipow}}{\sigma}\Mu_\epsilon^2.
$$
\end{lemma}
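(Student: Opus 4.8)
The plan is to exhibit $\{Y_j\}$ as a nonnegative supermartingale, deduce $\E{Y_j} \le Y_0 = \Phi_0$, and then read off the bound on $\E{N(T_\epsilon)}$ from a pathwise lower bound on $Y_j$ expressed through the renewal count.

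First I would verify that $\{Y_j\}$ is a supermartingale with respect to the natural filtration $\{\mathcal{F}^{M\cdot F}_{j-1}\}$. Writing the increment as
\[
	Y_{j+1}-Y_j \; = \; \left(\Phi_{j+1}-\Phi_j+\frac{\sigma}{\Mu_j^2}\right)\mathbf{1}_{\{T_\epsilon>j\}},
\]
I observe that on $\{T_\epsilon \le j\}$ the process is frozen, so the increment vanishes, while $\{T_\epsilon>j\}$ is measurable with respect to the past. Pulling this indicator outside the conditional expectation and invoking Lemma~\ref{lm:decrease:phi} on the event $\{T_\epsilon>j\}$ yields $\E{Y_{j+1}-Y_j \,|\, \mathcal{F}^{M\cdot F}_{j-1}} \le 0$. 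Integrability of each $Y_j$ follows from Assumption~\ref{assum:bnds:mu&Phi} (which bounds $\Phi$ by $\Phi_{\max}$) together with $\Mu_k \ge \mu_{\min}$, so that $Y_j$ is bounded for every finite $j$. Hence $\E{Y_j} \le \E{Y_0} = \Phi_0$ for all $j$.

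Next, since $\Phi \ge 0$, I would discard the $\Phi$ term to obtain the lower bound $Y_j \ge \sigma \sum_{k=0}^{\min(j,T_\epsilon)-1}\Mu_k^{-2}$. The summand is nondecreasing in $j$, and Theorem~\ref{theo:cvliminf} guarantees $T_\epsilon<\infty$ almost surely, so monotone convergence gives
\[
	\sigma\,\E{\sum_{k=0}^{T_\epsilon-1}\frac{1}{\Mu_k^2}} \; \le \; \Phi_0.
\]
Finally I would convert the sum into a renewal count. Whenever $\Mu_k \le \Mu_\epsilon$ one has $\Mu_k^{-2} \ge \Mu_\epsilon^{-2}$, and by construction every renewal time $A_i$ satisfies $\Mu_{A_i} \le \Mu_\epsilon$, including $A_0=0$ because the standing choice $\mu_0 = \Mu_\epsilon/\lambda^s$ with $s>0$ forces $\Mu_0 < \Mu_\epsilon$. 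Counting the distinct renewals $A_0,\dots,A_{N(T_\epsilon-1)}$ lying in $[0,T_\epsilon-1]$ and using $N(T_\epsilon-1)+1 \ge N(T_\epsilon)$ (at most one renewal can occur exactly at time $T_\epsilon$) gives the pathwise bound $\sum_{k=0}^{T_\epsilon-1}\Mu_k^{-2} \ge N(T_\epsilon)/\Mu_\epsilon^2$. Substituting into the previous display and rearranging produces $\E{N(T_\epsilon)} \le (\Phi_0/\sigma)\,\Mu_\epsilon^2$, as claimed.

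I expect the supermartingale verification to be the main technical obstacle: specifically, handling the stopping-time truncation carefully and confirming that $\mathbf{1}_{\{T_\epsilon>j\}}$ may be extracted from the conditional expectation so that Lemma~\ref{lm:decrease:phi} applies. A secondary subtlety is the off-by-one bookkeeping in the renewal-counting step, where the inclusion of $A_0=0$ (justified by $\mu_0<\Mu_\epsilon$) is exactly what pins down the stated constant rather than a weaker bound of the form $\E{N(T_\epsilon)} \le (\Phi_0/\sigma)\Mu_\epsilon^2 + 1$.
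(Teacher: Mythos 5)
Your proposal is correct and follows essentially the same route as the paper: exhibit $\{Y_j\}$ as a supermartingale via Lemma~\ref{lm:decrease:phi}, bound its expectation at the stopping time by $\Phi_0$, and lower-bound the accumulated sum $\sigma\sum_k \Mu_k^{-2}$ by $\sigma N(T_\epsilon)\Mu_\epsilon^{-2}$ using that $\Mu_{A_i}\le\Mu_\epsilon$ at every renewal. The only (harmless) variation is that you replace the paper's appeal to the optional stopping theorem with the iterated bound $\E{Y_j}\le\Phi_0$ plus monotone convergence, and your off-by-one bookkeeping in the renewal count is in fact slightly more careful than the paper's.
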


\begin{proof} 
Note that $Y_{j} $ defines a supermartingale with respect to $\mathcal{F}_{j-1}^{M \cdot R}$. Indeed, if $j< T_{\epsilon}$, then using Proposition \ref{lm:decrease:phi} one has,
\begin{eqnarray*}
\E{Y_{j+1}| \mathcal{F}_{j-1}^{M \cdot R}} 
&= &
\E{ \Phi_{j+1}  | \mathcal{F}_{j-1}^{M \cdot R}} 
+ \E{\sigma \sum_{k=0}^{j}\frac{1}{\Mu_k^2\,\|r(X_k)\|^{\ipow}} 
\middle| \mathcal{F}_{j-1}^{M \cdot R}} \\
 & \le & \Phi_{j}  - \sigma \left(\frac{1}{\Mu_j^2 \|r(X_j)\|^{\ipow}}\right)+ \sigma \sum_{k=0}^{j}\left(\frac{1}{\Mu_k^2 \|r(X_k)\|^{\ipow}}\right)\\
 &= & \Phi_{j}  +  \sigma \sum_{k=0}^{j -1}\left(\frac{1}{\Mu_k^2\|r(X_k)\|^{\ipow}}\right) =Y_{j}.
\end{eqnarray*}
If $j\ge T_{\epsilon}$, one has $Y_{j+1}= Y_j$ and thus $\E{Y_{j+1}| \mathcal{F}_{j-1}^{M \cdot R}}= Y_j$.
Using Assumption \ref{assum:bnds:mu&Phi}, one has for all $j\ge T_{\epsilon}$, $|Y_{j} | = |Y_{T_{\epsilon}}| \le \left(\Phi_{\max} + \frac{ (T_{\epsilon}+1) \sigma}{\mu^2_{\min}}\right)\epsilon_p^{\frac{1-2^{\icc+1}}{2^\icc}}$. Hence, since $T_{\epsilon}$ is bounded, $Y_{j}$ is also bounded. Because $T_{\epsilon}$ is a stopping time, the optimal stopping theorem~\cite[Theorem 6.4.1]{SMRoss_1983} for supermartingales applies, and we have
$$
 \E{ Y_{T_{\epsilon}}} \le \E{Y_0}.
$$
Hence, 
\begin{eqnarray} \label{ineq:OST}
\sigma \E{ \sum_{k=0}^{T_{\epsilon}} \frac{1}{\Mu_k^2 \|r(X_k)\|^{\ipow}} } \le  \E{ Y_{T_{\epsilon}}} &\le & \E{Y_0} =  \Phi_{0}.
\end{eqnarray}
By the definition of the counting process $N(T_{\epsilon})$, since the renewal times $A_i$ (which satisfy $\Mu_{A_i}\le \Mu_\epsilon$) are a subset of the iterations $0, 1,\ldots, T_{\epsilon}$, one has
\[
	\sum_{k=0}^{T_{\epsilon}} \frac{1}{\Mu_k^2 \|r(X_k)\|^{\ipow}} 
	\ge \sum_{k=0}^{T_{\epsilon}} \frac{1}{\Mu_k^2 \kappa_r^{\ipow}}
	\ge \frac{N(T_{\epsilon})}{\Mu_\epsilon^2 \kappa_r^{\ipow}}
\]
Inserting the latter inequality in (\ref{ineq:OST}), one gets
\begin{eqnarray*}
 \E{N(T_{\epsilon}) } &\le &  \frac{\Phi_{0}\kappa_r^{\ipow}}{\sigma}\Mu_\epsilon^2,
\end{eqnarray*}
which concludes the proof.
\end{proof}

Using Wald's equation~\cite[Corollary 6.2.3]{SMRoss_1983}, we can finally obtain a bound on the 
expected value of $T_{\epsilon}$.

 \begin{theorem} \label{theo:wcc}
 Under the assumptions of Proposition \ref{lm:decrease:phi} as well as Assumption~\ref{assum:bnds:mu&Phi},
 one has
$$
\E{T_{\epsilon}} \le \frac{pq}{2pq-1} \left( \frac{\Phi_0\kappa_r^{\ipow}}{\sigma} \Mu_\epsilon^2 +1 \right) -1 \le \frac{pq}{2pq-1} \left(\kappa_s \kappa_r^{\ipow} \epsilon_d^{-2}\epsilon_p^{-\frac{2^{\icc+1}-1}{2^{\icc-1}}}+1\right) -1.
$$
where $\kappa_s = \frac{\tau f(x_0) +(1-\tau) \mu^{-2}_0}{\tfrac{1}{4}(1-\tau)\left(1-\tfrac{1}{\lambda^2}\right)} \zeta^2$, 
	$\tau \in (0,1)$ \revised{satisfies~\eqref{eq:taumuseries} for a value $\zeta$ such 
	that~\eqref{eq:zetamuseries} holds.}
\end{theorem}

\begin{proof}
First note that the renewal process $A_{N(T_\epsilon)+1} = \sum^{N(T_\epsilon)}_{i=0} \tau_i$ where $\tau_i$ defines independent inter-arrival times. Moreover, since the probabilities $p$ and $q$ satisfy (\ref{eq:condprobaspq1}), one has $pq>1/2$ and hence, by applying Lemma \ref{lm:E_tau}, for all $i=1, \ldots, N(T_\epsilon)$ one has $\E{\tau_i} \le \frac{pq}{2pq-1}< +\infty$. Thus, by Wald's equation \cite[Corollary 6.2.3]{SMRoss_1983}, one gets,
$$
\E{A_{N(T_\epsilon)+1}} = \E{\tau_1} \E{N(T_{\epsilon})+1} \le  \frac{pq}{2pq-1} \E{N(T_{\epsilon})+1}  .
$$
By the definition of $N(T_\epsilon)$ one has $A_{N(T_{\epsilon}) +1} > T_{\epsilon}$, hence using Lemma \ref{lm:E_N_T_eps} one gets
$$
\E{T_{\epsilon}} \le \E{A_{N(T_\epsilon)+1}} -1 \le \frac{pq}{2pq-1} \left( \frac{\Phi_0\kappa_r^{\ipow}}{\sigma} \Mu_\epsilon^2  +1  \right) -1,
$$
which establishes the result by definition of $\kappa_s$, $\Phi_0$ and $\Mu_\epsilon$.
\end{proof}
We now comment on the complexity orders appearing in Theorem~\ref{theo:wcc}.
When $\icc=-1$, the result of the theorem matches the bounds derived for 
stochastic trust-region methods~\cite{JBlanchet_CCartis_MMenickelly_KScheinberg_2019}. 
For $\icc=0$, the order becomes $\epsilon_d^{-2} \epsilon_p^{-2}$ in expectation 
for the classical scaled optimality criterion. In addition, as 
$\icc \rightarrow \infty$, the bound of Theorem~\ref{theo:wcc} asymptotically 
becomes of order $\epsilon_d^{-2} \epsilon_p^{-4}$ for 
the scaled criterion $\tfrac{\|J(x)^\top r(x)\|}{\|r(x)\|^2} \le \epsilon_d$ or 
$\|r(x)\| \le \epsilon_p$.
Note that if we set $\epsilon_p=\epsilon \epsilon_d^{-1}$ for some 
$\epsilon \in (0,\epsilon_d)$, we recover a bound in $\epsilon^{-2}$ for the 
case $\icc=0$, matching the order obtained asymptotically by Gould et 
al.~\cite{NIMGould_TRees_JAScott_2019}.

\section{Applications}
\label{sec:applis}

\revised{In this section, we discuss the relevance of our approach for several 
formulations arising in inverse problems, data assimilation and machine 
learning.}

\subsection{Ensemble methods for inverse problems and data assimilation}
\label{subsec:applis:enkf}

\revised{We describe the problem of interest using the terminology of inverse problems. 
Let $x^*$ be a ground truth vector and $y$ be a vector of observations such that
\begin{equation} 
\label{eq:inversepb}
	y  = \mathcal{H}(x^*) + \zeta,
\end{equation}
where $\mathcal{H}$ is the observational operator mapping the unknowns from 
the state space to the observation space and 
$\zeta \sim \mathcal{N}(0,R)$ is a Gaussian noise vector with given covariance matrix $R$. The maximum likelihood estimator for the vector $x^*$ can then be obtained by solving:
\begin{equation} \label{eq:likelihood}
\displaystyle \min_{x}  
\frac{1}{2}\left(\|y - \mathcal{H}(x)\|^2_{R^{-1}}\right).
\end{equation}
In inverse problems, the formulation~(\ref{eq:likelihood}) is typically ill-posed, 
and a regularized version of this problem is used instead. 
The formulation of Iglesias et al.~\cite{Iglesias_2013} incorporates
prior knowledge of $x$ in the form of a 
finite dimensional space where problem (\ref{eq:likelihood}) is solved.
This approach bears strong connection with data assimilation, in which a prior knowledge on the state  
of the form $x = b + \xi $ is typically assumed, 
with $b$ being a fixed prior (or a background) on $x$ and $\xi \sim  \mathcal{N}(0,B^{\infty})$ 
being a Gaussian noise vector with unknown covariance matrix $B^{\infty}$ (see for instance \cite{Iglesias_2016, Iglesias_2020, Chada_2020}). 
With such a prior,
problem (\ref{eq:likelihood}) is then replaced by
 \begin{equation} \label{4dvarsc-c}
\displaystyle \min_{x}  
\frac{1}{2}\left(\|y - \mathcal{H}(x)\|^2_{R^{-1}}+  \|x - b\|^2_{(B^\infty)^{-1}}\right),
\end{equation}
which is a maximum a posteriori estimation problem. The latter formulation matches the 
structure of problem~\eqref{eq:mainpb} with
$r(x):=\left(\begin{array}{c} (B^{\infty})^{-1/2}(x-x_b) \\ 
R^{-1/2}(y-\mathcal{H}(x)) \end{array} \right)$. 
}


\revised{In both data assimilation and inverse problems, an important issue with 
formulation~\eqref{4dvarsc-c} is the estimation of the covariance matrix 
$B^{\infty}$. One possible approach to address this issue 
consists in estimating $B^{\infty}$ using a random approximation via an ensemble.
Assuming that an initial ensemble of $\hat{n}$ elements 
$\left\{\hat x^k\right\}_{k=1}^{\hat{n}}$ is available, the matrix 
$B^{\infty}$ is approximated by the empirical covariance matrix of 
the ensemble, given by
\begin{eqnarray}\label{emp:cov:mat}
B^{\hat{n}} &:=& \frac{1}{\hat{n}-1} 
\sum_{k=1}^{\hat{n}} \left(\hat x^k - b\right) \left(\hat x^k - b\right)^{\top}.
\end{eqnarray}
By assuming that the initial ensemble is sampled from the Gaussian distribution with mean $b$ and unknown covariance matrix $B^{\infty}$,  
 then the matrix $B^{\hat{n}}$ follows a Wishart distribution~\cite{Wishart_1928}. 
Thus, if $\hat{n} \ge n+1$, then $B^{\hat{n}}$ is nonsingular with 
probability one, $(B^{\hat{n}})^{-1}$ follows an inverse Wishart distribution, and
\begin{eqnarray}\label{eq:Wishart:expected}
\E {B^{\hat{n}}} = B^\infty \text{ and }  \E  {(B^{\hat{n}})^{-1}} 
&= &\frac{\hat{n}-1}{\hat{n}-1-n} {(B^\infty)^{-1}}.
\end{eqnarray}
Provided $\hat{n}$ is large enough relative to $n$, the empirical covariance matrix 
$B^{\hat{n}}$ can be assumed to be non-singular and $\E{(B^{\hat{n}})^{-1}}$ 
is very close to ${(B^\infty)^{-1}}$. 
Using $B^{\hat{n}}$ to approximate $B^{\infty}$, a noisy approximation of the 
objective function in~\eqref{4dvarsc-c} is given by
\begin{equation} \label{4dvarsc-cc-} 
{\frac{1}{2} \left(\|y - \mathcal{H}(x)\|^2_{R^{-1}}  + \|x - b\|^2_{(B^{\hat{n}})^{-1}}
\right)}.
\end{equation}
%
}

\revised{
Iglesias et al.~\cite{Iglesias_2013, Iglesias_2016} proposed approximately solving
~(\ref{4dvarsc-cc-}) using the Ensemble Kalman Inversion 
(EKI) method~\cite{ATarantola_2005}, an adaptation of
the ensemble Kalman filter (EnKF) used in data 
assimilation~\cite{GEvensen_1994,GEvensen_2009} to inverse problems. 
The TEKI (Tikhonov-EKI) approach of Chada et al.~\cite{Chada_2020} adds an 
extra regularization of the form $\frac{1}{2} \gamma \|x\|^2$  to the 
formulation~(\ref{4dvarsc-cc-}), note the resemblance to the 
Levenberg-Marquardt approach. More recently, Iglesias and 
Yang~\cite{Iglesias_2020} introduced a new adaptive regularization
strategy for EKI that comes even closer to our work, with the most significant
distinction being that the subproblems 
considered in the EKI framework are typically nonlinear. By linearizing the 
operator $\mathcal{H}$ as in Gauss-Newton techniques, one would obtain 
an algorithm quite close in spirit to Algorithm~\ref{alg:LM}, and our 
analysis could help in endowing that method with global convergence and 
complexity guarantees even in the presence of noise in the objective value.
}

\revised{
Despite this connection, we acknowledge that our setup (and in particular, 
satisfaction of the properties~\eqref{eq:Wishart:expected}) requires 
significantly more samples than what is usually used in practical ensemble 
methods. Furthermore, our analysis relies on conditional independence between 
the iterations, which would amount to generate new ensembles at every iteration. 
Such approaches would likely be impractical in data assimilation and inverse 
problems, yet we believe that the links between our results could be informative 
in developing new variants of the EKI approach with global guarantees.
}

\subsection{Subsampling approaches}
\label{subsec:applis:ML}

\revised{Least-squares problems play a prominent role in data analysis, as the 
least-squares loss is a common way to assess the accuracy of a machine learning 
model. In such a setup, the residual function can be written as 
$r(x) = [ y_i - h(z_i,x)]_{i=1}^m$, where $\{(z_i,y_i)\}_{i=1}^m$ are data 
points and $h$ is a learning model parameterized by $x$, such as the output 
of a neural network. In modern data science applications, the number of data 
points is usually quite large, but the data is assumed to follow a known 
distribution form. As a result, one can rely on using subsamples of the data points 
to estimate the function values and the gradient: such a paradigm is at the 
essence of stochastic gradient methods~\cite{LBottou_FECurtis_JNocedal_2018}.}

\revised{In this context, it is possible to guarantee the probabilistic 
properties defined in Section~\ref{sec:LMprobaprop} with a sufficiently large 
number of samples. Following recent work in optimization and 
subsampling~\cite{EBergou_YDiouane_VKunc_VKungurtsev_CWRoyer_2021}, we know that 
$\mathcal{O}\left(\ln\left(\tfrac{1}{\delta}\right)\Mu_j^4\right)$ samples are needed to get 
accurate function estimates with probability $1-\delta$, while 
$\mathcal{O}\left(\ln\left(\tfrac{1}{\delta}\right)\Mu_j^2\right)$ samples can be used to build 
a gradient estimate that is accurate with probability $1-\delta$. Therefore, 
in order to satisfy Assumptions~\ref{assum:models} and~\ref{assum:festimates}, 
we require 
\[
	\mathcal{O}\left(\ln\left(\tfrac{1}{1-p}\right)\max\{\Mu_j^4,\Mu_j^2\}\right) 
	+ \mathcal{O}\left(\ln\left(\tfrac{1}{1-q}\right)\Mu_j^4 \right)
\]
samples per iteration.
Note that these rates naturally extend to the stochastic 
case~\cite{JLarson_SCBillups_2016}.}
\section{Conclusion} \label{sec:conclusion}

We proposed a stochastic Levenberg-Marquardt method
to solve nonlinear least-squares problems wherein the objective function and 
its gradient are subject to noise and can only be computed accurately within a 
certain probability. By employing a scaling formula for the Levenberg-Marquardt 
parameter, we leveraged the link between our approach and a 
trust-region-type framework to obtain complexity bounds in expectation for our 
method. Our guarantees are based upon a scaled gradient criterion that 
exploits the least-squares structures of our problem, and generalizes 
previously proposed metrics. \revised{We have illustrated how our working assumptions can hold in the context of
 inverse problems and machine learning where the noise in the objective arises from empirical covariance estimators and subsampling techniques, respectively.}

The study of the performance of our approach when applied to large-scale data 
assimilation problems is a natural continuation of the present work, that poses 
additional challenges. Indeed, in practical situations, a single ensemble may 
be used at each iteration, which would introduce correlations between the  
model and the estimates. In addition, the ensemble size might be significantly 
smaller than the dimension of the state space, which can jeopardize the quality 
of the ensemble approximations. Extending our analysis to these settings raises 
a number of theoretical issues, which we plan to investigate so as to hew our 
method closer to standard practice.

\section*{Acknowledgments}

The authors would like to thank Matt Menickelly and Katya Scheinberg for 
useful discussions regarding the STORM algorithm.
\revised{The authors are also grateful to the associate editor and two 
anonymous referees for the careful reading of the paper and their suggestions.}

\bibliographystyle{siamplain}
\bibliography{LM-stochastic}

\newpage
\begin{appendices}

\section{Proof of Theorem~\ref{theo:cvmuseries}}
\label{sec:prooftheomuseries}

\begin{proof}[Proof of Theorem~\ref{theo:cvmuseries}]
	Consider a realization of Algorithm~\ref{alg:LM}, and let $\phi_j$ be the 
	corresponding realization of $\Phi_j$. If $j$ is the index of a successful 
	iteration, then $x_{j+1} = x_j + s_j$, and 
	$\mu_{j+1} \ge \tfrac{\mu_j}{\lambda}$. One thus has:
	\begin{equation} \label{eq:phidecsuccits}
		\phi_{j+1} - \phi_j \; \le \; 
		\tau\left(\|\revised{r(x_{j+1})}\|^{1/2^\icc}-\|\revised{r(x_j)}\|^{1/2^\icc} \right) +   
		(1-\tau)\frac{\lambda^{2} - 1}{\mu_j^2 \|r_j\|^{\ipow}}.
	\end{equation}
	If $j$ is the index of an unsuccessful iteration, $x_{j+1}=x_j$ and 
	$\mu_{j+1}=\lambda\mu_j$, leading to 
	\begin{equation} \label{eq:phidecunsuccits}
		\phi_{j+1} - \phi_j \; = \; (1-\tau)\left(\frac{1}{\lambda^2}-1\right)
		\frac{1}{\mu_j^2 \|\revised{r(x_j)}\|^{\ipow}} < 0.
	\end{equation}
	For both types of iterations, we will consider four possible outcomes, 
	involving the quality of the model and that of the estimates. 
	
	\medskip
	
	\emph{Case 1: $\|J(x_j)^\top r(x_j)\| \ge \tfrac{\zeta}{\mu_j}$.}

	\begin{enumerate}
		\item \textbf{Both $m_j$ and $(r^0_j,r^s_j)$ are accurate.} Since we 
		are in Case 1,
		\[
			\left\| J(x_j)^\top r(x_j)\right\| 
			\ge \left( \kappa_{eg} + \kappa_{\mu g} \right)
			\frac{1}{\mu_j}.
		\]
		Because the model is $(\kappa_{ef},\kappa_{eg})$-first-order accurate, this 
		implies 
		\[
			\|J_{m_j}^\top r_{m_j}\| \ge \|J(x_j)^\top r(x_j)\|-\frac{\kappa_{eg}}{\mu_j} 
			\ge (\zeta-\kappa_{eg})\frac{1}{\mu_j} \ge \frac{\kappa_{\mu g}}{\mu_j}
		\]
		so~\eqref{eq:condmodelgraditsucc} holds; since the estimates 
		are also accurate, the iteration is \revised{necessarily} successful by 
		Lemma~\ref{lemma:condmodelgraditsucc}. Moreover, 
		\[
			\|J(x_j)^\top r(x_j)\| \; \ge \; \frac{\zeta}{\mu_j} 
			\; \ge \; \left(\kappa_{eg} + \max\left\{\kappa_{J_m}^2,
			\frac{8(\kappa_{ef}+\kappa_{efs})}{\eta_1 \theta_{fcd}}\right\} 
			\right)\frac{1}{\mu_j} ,
		\]
		so the condition~\eqref{eq:mudecreasegoodmodeltruegrad} is 
		satisfied, and by Lemma~\ref{lemma:decreasegoodmodeltruegrad}, we 
		can guarantee a decrease on the function value. More precisely,
		\begin{equation*}
			\|r(x_{j+1})\|^{1/2^\icc} - \|r(x_j)\|^{1/2^\icc} \; \le \; 
			-\frac{C_1}{2^{\icc+1} \frac{\|J(x_j)^\top r(x_j)\|}
			{\|r(x_j)\|^{\ipow}\mu_j}},
		\end{equation*}
		leading to
		\begin{eqnarray} \label{eq:case1a}
			\varphi_{j+1} - \varphi_j &\le &-\tau \frac{C_1}{2^{\icc+1}} 
			\frac{\|J(x_j)^\top r(x_j)\|}{\|r(x_j)\|^{\ipow}\mu_j}			
			+(1-\tau)(\lambda^{2}-1)\frac{1}{\mu_j^2\|r(x_j)\|^{\ipow}}.
		\end{eqnarray}
		\item \textbf{Only $m_j$ is accurate.} The decrease formula of 
		Lemma~\ref{lemma:decreasegoodmodeltruegrad}
		is valid in \revised{this} case: therefore, if the iteration is successful, 
		then~\eqref{eq:case1a} holds, and we have
		\begin{eqnarray*} 
			\varphi_{j+1} - \varphi_j &\le &-\tau \frac{C_1}{2^{\icc+1}} 
			\frac{\|J(x_j)^\top r(x_j)\|}{\|r(x_j)\|^{\ipow}\mu_j}			
			+(1-\tau)(\lambda^{2}-1)\frac{1}{\mu_j^2\|r(x_j)\|^{\ipow}} \\
			&\le &\frac{-\tau \tfrac{C_1}{2^{\icc+1}} \zeta + 
			(1-\tau)(\lambda^{2}-1)}{\mu_j^2\|r(x_j)\|^{\ipow}} 
			\nonumber \\
			&\le &\frac{-\tau \tfrac{C_1}{2^{\icc+2}} \zeta + 
			(1-\tau)(\lambda^{2}-1)}{\mu_j^2\|r(x_j)\|^{\ipow}} 
			\nonumber \\
			&< &(1-\tau)\left(\frac{1}{\lambda^2}-1\right) 
			\frac{1}{\mu_j^2\|r(x_j)\|^{\ipow}},
			\nonumber
		\end{eqnarray*}		
		using~\eqref{eq:taumuseries} to obtain the last inequality. Therefore, 
		\eqref{eq:phidecunsuccits} holds when the iteration is successful. 
		From the beginning of the proof, we know that \eqref{eq:phidecunsuccits} 
		also holds if the iteration is unsuccessful.
		\item \textbf{Only $(r^0_j,r^s_j)$ is accurate.} If the 
		iteration is unsuccessful, then~\eqref{eq:phidecunsuccits} is 
		satisfied. Otherwise, we can apply Lemma~\ref{lemma:decreasegoodestim} 
		and have a guarantee of decrease in the case of a successful iteration, 
		namely,
		\[
			\|r(x_{j+1})\|^{1/2^\icc} - \|r(x_j)\|^{1/2^\icc} \le -\frac{C_2}{2^{\icc+1}} 
			\frac{1}{\mu_j^2 \|r(x_j)\|^{\ipow}},
		\]
		hence
		\begin{eqnarray*} 
			\varphi_{j+1} - \varphi_j &\le &\left[-\tau \frac{C_2}{2^{\icc+1}} + 
			(1-\tau)(\lambda^{2}-1)\right]\frac{1}{\mu_j^2\|r(x_j)\|^{\ipow}} \\
			&\le &(1-\tau)\left(\frac{1}{\lambda^{2}}-1\right)
			\frac{1}{\mu_j^2\|r(x_j)\|^{\ipow}},
		\end{eqnarray*}
		where we used~\eqref{eq:taumuseries} to obtain the last inequality. 
		Thus~\eqref{eq:phidecunsuccits} also holds if the iteration is successful.
		\item \textbf{Both $m_j$ and $(r^0_j,r^s_j)$ are inaccurate.} If the 
		iteration is unsuccessful, then~\eqref{eq:phidecunsuccits} holds. Suppose 
		now that the iteration is successful: by considering a Taylor expansion of 
		$f(x_j+s_j)$ around $x_j$, we know that the possible increase in the step 
		is bounded above by:
		\begin{eqnarray*}
			f(x_j+s_j) - f(x_j) &\le &\nabla f(x_j)^\top s_j + \frac{\revised{\nu}}{2}\|s_j\|^2 \\
			&= &\left[J(x_j)^\top r(x_j)\right]^\top s_j + \frac{\revised{\nu}}{2}\|s_j\|^2 \\
			&\le &\|J(x_j)^\top r(x_j)\| \|s_j\| + \frac{\revised{\nu}}{2}\|s_j\|^2 \\
			&\le &\|J(x_j)^\top r(x_j)\| \|s_j\| + \revised{\nu}\frac{\|s_j\|}{\mu_j} \\
			&\le &\left(1+\frac{\revised{\nu}}{\zeta}\right)\|J(x_j)^\top r(x_j)\|\|s_j\| \\
			&\le &2\left(1+\frac{\revised{\nu}}{\zeta}\right)\frac{\|J(x_j)^\top r(x_j)\|}{\mu_j}.
		\end{eqnarray*}
		We thus have
		\[
			\|r(x_j+s_j)\|^2 - \|r(x_j)\|^2 \le 4\left(1+\frac{\revised{\nu}}{\zeta}\right)
			\frac{\|J(x_j)^\top r(x_j)\|}{\mu_j}.
		\]		
		Suppose that $\|r(x_j+s_j)\| > \|r(x_j)\|$. Applying 
		Lemma~\ref{lemma:oursab} to the above equation yields:
		{\small
		\[
			\|r(x_j+s_j)\|^{1/2^\icc} - \|r(x_j)\|^{1/2^\icc} \le 
			\frac{4(1+\frac{\revised{\nu}}{\zeta})}{\mu_j}
			\frac{\|J(x_j)^\top r(x_j)\|}{\|r(x_j+s_j)\|^{\ipow}} \le 
			\frac{4\left(1+\frac{\revised{\nu}}{\zeta}\right)}{\mu_j}
			\frac{\|J(x_j)^\top r(x_j)\|}{\|r(x_j)\|^{\ipow}},
		\]
		}
		and note that this relation still holds when $\|r(x_j+s_j\|\le \|r(x_j)\|$ 
		as the left-hand side is negative in that case.
		We thus obtain the following bound on the change in $\phi$:
		\begin{equation} \label{eq:case1d}
			\phi_{j+1} - \phi_j \; \le \; \tau \frac{C_3}{2^{\icc+1}} 
			\frac{\|J(x_j)^\top r(x_j)\|}{\|r(x_j)\|^{\ipow}} \frac{1}{\mu_j}
			+ (1-\tau)(\lambda^{2}-1)\frac{1}{\mu_j^2},
		\end{equation}
		\revised{with $C_3  = 4\left(1+\tfrac{\revised{\nu}}{\zeta}\right)$}. The 
		right-hand side of~\eqref{eq:case1d} is nonnegative and larger than 
		that of~\eqref{eq:phidecunsuccits}, thus the bound~\eqref{eq:case1d}
		holds when the iteration is successful and when it is unsuccessful.
	\end{enumerate}	
	Summarizing the four cases, we have that the bound~\eqref{eq:case1a} on 
	$\Phi_{j+1}-\Phi_j$ holds in case 1-1, the 
	bound~\eqref{eq:phidecunsuccits} holds in both cases 1-2 and 1-3, and the 
	bound~\eqref{eq:case1d} holds in case 1-4.
	Putting those together with their associated probability of  
	occurrence, we obtain:
	{\small 
	\begin{eqnarray*}
	& & 
	\E{\Phi_{j+1}-\Phi_j | \mathcal{F}_{j-1}^{M \cdot R}\cap E^0_j \cap 
	\left\{\|J(X_j)^\top r(X_j)\| \ge \frac{\zeta}{\Mu_j} \right\}} \\ 
	&\le & 
	pq \left[ -\tau \frac{C_1}{2^{\icc+1}}
	\frac{\|J(X_j)^\top r(X_j)\|}{\|r(X_j)\|^{\ipow}} \frac{1}{\Mu_j} + 
	(1-\tau)(\lambda^{2}-1)\frac{1}{\Mu_j^2\|r(X_j)\|^{\ipow}}\right] \\
	& &
	+[p(1-q)+(1-p)q]\left[(1-\tau)\left(\frac{1}{\lambda^2}-1\right)
	\frac{1}{\Mu_j^2}\right] \\
	& &
	+(1-p)(1-q)\left[\tau \frac{C_3}{2^{\icc+1}} 
	\frac{\|J(X_j)^\top r(X_j)\|}{\|r(X_j)\|^{\frac{2^{\icc+1}-1}{2^{\icc+1}}}} 
	\frac{1}{\Mu_j}
	+ (1-\tau)(\lambda^{2}-1)\frac{1}{\Mu_j^2\|r(X_j)\|^{\ipow}}\right] \\
	&= &
	\frac{-C_1 pq + (1-p)(1-q)C_3}{2^{\icc+1}}\tau
	\frac{\|J(X_j)^\top r(X_j)\|}{\|r(X_j)\|^{\frac{2^{\icc+1}-1}{2^{\icc+1}}}}
	\frac{1}{\Mu_j^2} \\
	& &
	+[pq-\tfrac{1}{\lambda^2}(p(1-q)+(1-p)q)+(1-p)(1-q)] (1-\tau)(\lambda^{2}-1)
	\frac{1}{\Mu_j^2\|r(X_j)\|^{\ipow}} \\
	&\le &
	\frac{-C_1 pq + (1-p)(1-q)C_3}{2^{\icc+1}}\tau
	\frac{\|J(X_j)^\top r(X_j)\|}{\|r(X_j)\|^{\frac{2^{\icc+1}-1}{2^{\icc+1}}}}
	\frac{1}{\Mu_j^2} 
	+ (1-\tau)(\lambda^{2}-1)\frac{1}{\Mu_j^2\|r(X_j)\|^{\ipow}},
	\end{eqnarray*}
	}   

	where the last line uses 
	\[
		pq-\tfrac{1}{\lambda^2}(p(1-q)+(1-p)q)+(1-p)(1-q) \le (p + (1-p))(q+(1-q)) = 1.
	\]
	Suppose $p$ and $q$ are chosen such that
	\begin{equation}  \label{eq:condpqmuseries1}
		\frac{pq-1/2}{(1-p)(1-q)} \ge \frac{C_3}{C_1},
	\end{equation}	
	holds. Then, one has by combining~\eqref{eq:condpqmuseries1} 
	and~\eqref{eq:taumuseries}:
	\begin{equation} \label{eq:boundC1}
		\frac{-C_1 pq + (1-p)(1-q)C_3}{2^{\icc+1}} 
		\le -\frac{1}{2^{\icc+2}} C_1 
		\le -2\frac{(1-\tau)(\lambda^{2}-1)}{\tau\zeta}.
	\end{equation}
	On the other hand, since $\|J(X_j)^\top r(X_j)\| \ge \zeta/\Mu_j$, we have:
	\begin{equation*}
		(1-\tau)(\lambda^{2}-1)\frac{1}{\Mu_j^2 \|r(X_j)\|^{\ipow}} 
		\; \le \; -\frac{1}{2}[-C_1 pq + (1-p)(1-q)C_3]\tau
		\frac{\|J(X_j)^\top r(X_j)\|}{\|r(X_j)\|^{\ipow}}\frac{1}{\Mu_j}.
	\end{equation*}
	This leads to
	\begin{eqnarray*}
		\E{\Phi_{j+1}-\Phi_j | \mathcal{F}_{j-1}^{M \cdot R} \cap E^0_j \cap 
		\left\{\|J(X_j)^\top r(X_j)\| \ge \frac{\zeta}{\Mu_j}\right\}} 
		&\le &-\frac{1}{4}C_1\tau\frac{\|J(X_j)^\top r(X_j)\|}{\|r(X_j)\|^{\ipow}}
		\frac{1}{\Mu_j} \\
		&\le &-\frac{1}{4}C_1\tau\zeta\frac{1}{\Mu_j^2\|r(X_j)\|^{\ipow}},
	\end{eqnarray*}
	which, using~\eqref{eq:boundC1}, finally gives:
	{\small{
	\begin{eqnarray} \label{eq:case1}
		\E{\Phi_{j+1}-\Phi_j | \mathcal{F}_{j-1}^{M \cdot R} \cap E^0_j \cap  
		\left\{\|J(X_j)^\top r(X_j)\| \ge \frac{\zeta}{\Mu_j} \right\}} 
		&\le &-\frac{(1-\tau)\left(\lambda^{2}-1 \right)}{4}
		\frac{1}{\Mu_j^2\|r(X_j)\|^{\ipow}} \nonumber \\
		&\le &-\frac{(1-\tau)\left(1-\tfrac{1}{\lambda^2}\right)}{4}
		\frac{1}{\Mu_j^2\|r(X_j)\|^{\ipow}}.
	\end{eqnarray}
	}}

	\medskip
	
	\emph{Case 2: $\|J(x_j)^\top r(x_j)\| < \tfrac{\zeta}{\mu_j}$.}

	Whenever $\|J_{m_j}^\top r_{m_j}\| < \tfrac{\eta_2}{\mu_j}$, the iteration 
	is necessarily unsuccessful and~\eqref{eq:phidecunsuccits} holds. We thus 
	assume in what follows that 
	$\|J_{m_j}^\top r_{m_j}\| \ge \tfrac{\eta_2}{\mu_j}$, and consider again four 
	cases.	
	\begin{enumerate}
		\item \textbf{Both $m_j$ and $(r^0_j,r^s_j)$ are accurate.} Unlike case 1-a), 
		it is now possible for the iteration to be unsuccessful: in that case, we 
		have~\eqref{eq:phidecunsuccits}. Otherwise, if the iteration is 
		successful, then we can use the result from Lemma~\ref{lemma:decreasegoodestim}, 
		and we have:
		\[
			\|r(x_{j+1})\|^{1/2^\icc} - \|r(x_j)\|^{1/2^\icc} \le -\frac{C_2}{2^{\icc+1}}
			\frac{1}{\mu_j^2\,\|r(x_j)\|^{\ipow}}.
		\]
		We can thus apply the same reasoning than in case 1-3, which implies 
		that~\eqref{eq:phidecunsuccits} also holds when the iteration is successful.
		\item \textbf{Only $m_j$ is accurate.} If the iteration is unsuccessful, it 
		is clear that~\eqref{eq:phidecunsuccits} holds. Otherwise, using 
		$\eta_2 \ge \kappa_{J_m}^2$ that arises from~\eqref{eq:condeta2cvmuseries} 
		and applying the same argument as in the proof of 
		Lemma~\ref{lemma:decreasegoodestim}, we have 
		$\|J_{m_j}^\top r_{m_j}\| \ge \tfrac{\eta_2}{\mu_j} \ge 
		\tfrac{\kappa_{J_m}^2}{\mu_j}$, leading to
		\begin{eqnarray*}
			 m_j(x_j) - m_j(x_j+s_j) 
			 &\ge &\frac{\theta_{fcd}}{2}\frac{\|J_{m_j}^\top r_{m_j}\|^2}
			 {\kappa_{J_m}^2+ \mu_j \|J_{m_j}^\top r_{m_j}\|} \\
			 &\ge &\frac{\theta_{fcd}}{2}	
			 \frac{\|J_{m_j}^\top r_{m_j}\|^2}{2\mu_j \|J_{m_j}^\top r_{m_j}\|} 
			 = \frac{\theta_{fcd}}{4} \frac{\|J_{m_j}^\top r_{m_j}\|}{\mu_j} 
			 \ge \frac{\eta_2\theta_{fcd}}{4}
			\frac{1}{\mu_j^2}.
		\end{eqnarray*}
		Since the model is $(\kappa_{ef},\kappa_{eg})$-first-order accurate, the 
		function variation satisfies:
		\begin{eqnarray*}
			f(x_j) - f(x_j+s_j) &= 
			&f(x_j)-m_j(x_j)+m_j(x_j)-m_j(x_j+s_j)+m_j(x_j+s_j)-f(x_j+s_j) \\
			&\ge &-\frac{\kappa_{ef}}{\mu_j^2} + \frac{\eta_2\theta_{fcd}}{4}
			\frac{1}{\mu_j^2} - \frac{\kappa_{efs}}{\mu_j^2} \\
			&\ge &\left(\frac{\eta_2\theta_{fcd}}{4}-(\kappa_{ef}+\kappa_{efs})\right)
			\frac{1}{\mu_j^2} 
			\; \ge \; \frac{\kappa_{ef}+\kappa_{efs}}{2\mu_j^2},
		\end{eqnarray*}
		where the last line comes from~\eqref{eq:condeta2cvmuseries}.\\ 
		As a result, we have 
		$\|r(x_j+s_j)\|^2 - \|r(x_j)\|^2 \le - \tfrac{\kappa_{ef} + \kappa_{efs}}{\mu_j^2}$.
		Applying \revised{Lemma~\ref{lemma:oursab}} then gives:
		\[
			\|r(x_j+s_j)\|^{1/2^\icc} - \|r(x_j)\|^{1/2^\icc} \; \le \; 
			-\frac{\kappa_{ef}+\kappa_{efs}}{2^{\icc+1}}\frac{1}{\mu_j^2 \|r(x_j)\|^{\ipow}},
		\]
		and this leads to
		\begin{eqnarray} \label{eq:case2b}
			\phi_{j+1} - \phi_j &\le &\left[ - \tau\frac{\kappa_{ef}+\kappa_{efs}}{2^{\icc+1}}\frac{1}{\mu_j^2 \|r(x_j)\|^{\ipow}}+ 
			(1-\tau)(\lambda^{2}-1)\right]\frac{1}{\mu_j^2} \nonumber \\ 
			\phi_{j+1} - \phi_j &\le &-(1-\tau)\left(1 - \frac{1}{\lambda^2}\right)\frac{1}{\mu_j^2}
		\end{eqnarray}
		by~\eqref{eq:taumuseries}.
		\item \textbf{Only $(r^0_j,r^s_j)$ is accurate.} This case can be analyzed the 
		same way as Case 2.1 to show that~\eqref{eq:phidecunsuccits} holds regardless 
		of whether the iteration is successful or unsuccessful.
		\item \textbf{Both $m_j$ and $(r^0_j,r^s_j)$ are inaccurate.} As in Case 1.4, 
		we have
		\[
			f(x_j+s_j) - f(x_j) \le \|J(x_j)^\top r(x_j)\| \|s_j\| + \frac{\revised{\nu}}{2}\|s_j\|^2.
		\]
		Using $\|J(x_j)^\top r(x_j)\| < \frac{\zeta}{\mu_j}$ and~\eqref{eq:stepsizebound}, 
		we obtain:
		\begin{eqnarray*}
			f(x_j+s_j) - f(x_j) 
			&\le &\zeta\frac{\|s_j\|}{\mu_j}+\frac{\revised{\nu}}{2}\|s_j\|^2 \\
			&\le &2\left(\zeta+\revised{\nu}\right)\frac{1}{\mu_j^2}.
		\end{eqnarray*}
		Then, applying Lemma~\ref{lemma:oursab} as in Case 1.4 yields
		\begin{equation*}
			\|r(x_j+s_j)\|^{1/2^\icc} - \|r(x_j)\|^{1/2^\icc} \le 
			\frac{4\left(\zeta+\revised{\nu}\right)}{\mu_j} 
			\frac{\|J(x_j)^\top r(x_j)\|}{\|r(x_j)\|^{\ipow}},
		\end{equation*}
		thus
		\begin{equation*} 
			\|r(x_j+s_j)\|^{1/2^\icc} - \|r(x_j)\|^{1/2^\icc} \le 
			\frac{4\zeta\left(1+\tfrac{\revised{\nu}}{\zeta}\right)}{\mu_j^2 \|r(x_j)\|^{\ipow}} 
			= \zeta C_3 \frac{1}{\mu_j^2 \|r(x_j)\|^{\ipow}}.
		\end{equation*}		
		and
		\begin{equation} \label{eq:case2d}
			\phi_{j+1} - \phi_j \; \le \; \left[ \tau C_3 \zeta + 
			(1-\tau)(\lambda^{2}-1)\right]\frac{1}{\mu_j^2\|r(x_j)\|^{\ipow}}.
		\end{equation}
	\end{enumerate}
	Combining all the subcases for Case 2, we can bound all of those 
	by~\eqref{eq:phidecunsuccits} save for Case 2.4, for which~\eqref{eq:case2d} 
	applies. Thus, we obtain:
	\begin{eqnarray*}
		& & \E{\Phi_{j+1}-\Phi_j | \mathcal{F}_{j-1}^{M \cdot R} \cap E^0_j \cap 
		\{\|\nabla f(X_j)\|  < \tfrac{\zeta}{\Mu_j}\}}  \\
		&\le & 
		[pq+p(1-q)+q(1-p)](1-\tau)\left(\frac{1}{\lambda^2}-1\right)
		\frac{1}{\Mu_j^2 \|r(X_j)\|^{\ipow}} \\
		& &
		+(1-p)(1-q)\left[ \tau C_3 \zeta + (1-\tau)(\lambda^{2}-1)\right]
		\frac{1}{\Mu_j^2 \|r(X_j)\|^{\ipow}} \\
		&\le & 
		-pq(1-\tau)\left(1 
		- \frac{1}{\lambda^2}\right)\frac{1}{\Mu_j^2 \|r(X_j)\|^{\ipow}} \\
		& &
		+(1-p)(1-q)\left[ \tau C_3 \zeta + (1-\tau)(\lambda^{2}-1)\right]
		\frac{1}{\Mu_j^2 \|r(X_j)\|^{\ipow}}.
	\end{eqnarray*}
	We now assume that $p$ and $q$ have been chosen such that $pq \ge \frac{1}{2}$ 
	and
	\begin{equation} \label{eq:condpqmuseries2}
		(1-p)(1-q) \le \frac{1}{4}\frac{(1-\tau)\left(1-\tfrac{1}{\lambda^2}\right)}
		{\tau C_3\zeta + (1-\tau)(\lambda^{2}-1)}
	\end{equation}
	holds. 
	Using~\eqref{eq:condpqmuseries2}, we obtain
	{\small{
	\begin{eqnarray} \label{eq:case2}
		\E{\Phi_{j+1}-\Phi_j | \mathcal{F}_{j-1}^{M \cdot R} \cap E^0_j \cap \{\|\nabla f(X_j)\| < 
		\tfrac{\zeta}{\Mu_j}\}} 
		&\le &-\frac{1}{4}(1-\tau)
		\left(1-\frac{1}{\lambda^2}\right)\frac{1}{\Mu_j^2 \|r(X_j)\|^{\ipow}},
	\end{eqnarray}
	}}
	which is the same amount of decrease as in~\eqref{eq:case1}. Letting 
	$\sigma :=\tfrac{1}{4}(1-\tau)\left(1-\tfrac{1}{\lambda^2}\right)$, we have
	then established that for every iteration $j$, 
	\[
		\E{\Phi_{j+1}-\Phi_j | \mathcal{F}_{j-1}^{M \cdot R} \cap E^0_j} 
		< -\frac{\sigma}{\Mu_j^2\|r(X_j)\|^{\ipow}}.
	\]
	As a result, the statement of the theorem holds.
\end{proof}


\end{appendices}

\end{document}